\newtheorem{theorem}{Theorem}[section]
\newtheorem{lemma}[theorem]{Lemma}
\newtheorem{proposition}[theorem]{Proposition}
\newtheorem{corollary}[theorem]{Corollary}
\theoremstyle{definition}
\theoremstyle{remark}
\newtheorem{remark}[theorem]{Remark}
\numberwithin{equation}{section}
\begin{document}

\setcounter{page}{1}

\title[Characterizing projections]{Characterizing projections among positive operators in the unit sphere}

\author[A.M. Peralta]{Antonio M. Peralta$^1$$^{*}$}

\address{$^{1}$Departamento de An{\'a}lisis Matem{\'a}tico, Facultad de
Ciencias, Universidad de Granada, 18071 Granada, Spain.}
\email{\textcolor[rgb]{0.00,0.00,0.84}{aperalta@ugr.es}}


\let\thefootnote\relax\footnote{Copyright 2016 by the Tusi Mathematical Research Group.}

\subjclass[2010]{Primary 47A05; Secondary 47L30, 46L05.}

\keywords{Projections; unit sphere around a subset; bounded linear operators; compact linear operators}

\date{Received: xxxxxx; Accepted: zzzzzz.
\newline \indent $^{*}$Corresponding author}

\begin{abstract} Let $E$ and $P$ be subsets of a Banach space $X$, and let us define the unit sphere around $E$ in $P$ as the set $$Sph(E;P) :=\left\{ x\in P : \|x-b\|=1 \hbox{ for all } b\in E \right\}.$$ Given a C$^*$-algebra $A$, and a subset $E\subset A,$ we shall write $Sph^+ (E)$ or $Sph_A^+ (E)$ for the set $Sph(E;S(A^+)),$ where $S(A^+)$ stands for the set of all positive operators in the unit sphere of $A$. We prove that, for an arbitrary complex Hilbert space $H$, the following statements are equivalent for every positive element $a$ in the unit sphere of $B(H)$:\begin{enumerate}[$(a)$]\item $a$ is a projection;
\item $Sph^+_{B(H)} \left( Sph^+_{B(H)}(\{a\}) \right) =\{a\}$.
\end{enumerate} We also prove that the equivalence remains true when $B(H)$ is replaced with an atomic von Neumann algebra or with $K(H_2)$, where $H_2$ is an infinite-dimensional and separable complex Hilbert space. In the setting of compact operators we prove a stronger conclusion by showing that the identity $$Sph^+_{K(H_2)} \left( Sph^+_{K(H_2)}(a) \right) =\left\{ b\in S(K(H_2)^+) : \!\! \begin{array}{c}
    s_{_{K(H_2)}} (a) \leq s_{_{K(H_2)}} (b),  \hbox{ and }\\
     \textbf{1}-r_{_{B(H_2)}}(a)\leq \textbf{1}-r_{_{B(H_2)}}(b)
  \end{array}\!\!
 \right\},$$ holds for every $a$ in the unit sphere of $K(H_2)^+$, where $r_{_{B(H_2)}}(a)$ and $s_{_{K(H_2)}} (a)$ stand for the range and support projections of $a$ in $B(H_2)$ and $K(H_2)$, respectively.
\end{abstract}

\maketitle

\section{Introduction}

In a recent attempt to solve a variant of Tingley's problem for surjective isometries of the
set formed by all positive operators in the unit sphere of $M_n(\mathbb{C})$, the space of all $n\times n$ complex matrices endowed with the spectral norm, G. Nagy has established an interesting characterization of those positive norm-one elements in $M_n(\mathbb{C})$ which are projections (see the final paragraph in the proof of \cite[Claim 1]{Nagy2017}). Motivated by the terminology employed by Nagy in the just quoted paper, we introduce here the notion of \emph{unit sphere around a subset} in a Banach space. Let $E$ and $P$ be subsets of a Banach space $X$. We define the \emph{unit sphere around $E$ in $P$} as the set $$Sph(E;P) :=\left\{ x\in P : \|x-b\|=1 \hbox{ for all } b\in E \right\}.$$ If $x$ is an element in $X$, we write $Sph(x;P)$ for $Sph(\{x\};P)$. Henceforth, given a Banach space $X$, let $S(X)$ denote the unit sphere of $X$. The cone of positive elements in a C$^*$-algebra $A$ will be denoted by $A^+$. If $M$ is a subset of $X$, we shall write $S(M)$ for $M\cap S(X)$. To simplify the notation, given a C$^*$-algebra $A$, and a subset $E\subset A,$ we shall write $Sph^+ (E)$ or $Sph_A^+ (E)$ for the set $Sph(E;S(A^+))$. For each element $a$ in $A$, we shall write $Sph^+ (a)$ instead of $Sph^+ (\{a\})$. \smallskip

Let $a$ be a positive norm-one element in $B(\ell_2^n)=M_n(\mathbb{C})$. The commented characterization established by Nagy proves that the following two statements are equivalent:
\begin{equation}\label{eq bi sphere around a point}
\begin{array}{l}
          \hbox{$(i)$\ $a$ is a projection}\ \ \ \ \ \ \ \ \ \ \ \ \ \ \ \ \ \ \ \ \ \ \ \ \ \ \ \ \ \ \ \ \ \ \ \ \ \ \ \ \ \ \ \ \ \ \ \ \ \ \ \ \ \ \ \ \ \ \ \ \ \ \ \ \ \ \  \\
          (ii) \ Sph^+_{M_n(\mathbb{C})} \left( Sph^+_{M_n(\mathbb{C})}(a) \right) =\left\{ a \right\},
        \end{array}
 \end{equation} (see the final paragraph in the proof of \cite[Claim 1]{Nagy2017}). As remarked by G. Nagy in \cite[\S 3]{Nagy2017}, the previous characterization (and the whole statement in \cite[Claim 1]{Nagy2017}) remains as an open problem when $H$ is an arbitrary complex Hilbert space. This is an interesting problem to be considered in operator theory, and in the wider setting of general C$^*$-algebras.\smallskip

In this note we extend the characterization in \eqref{eq bi sphere around a point} to the case in which $H$ is an arbitrary complex Hilbert space. In a first result we prove that, for any positive element $a$ in the unit sphere of a C$^*$-algebra $A$, the equality $Sph^+_{A} \left( Sph^+_{A}(a) \right) =\{a\}$  is a sufficient condition to guarantee that $a$ is a projection in $A$ (cf. Proposition \ref{p necessary condition for projection in terms of the sphere around a positive element}). In Theorem \ref{t characterization of projection in terms of the sphere around a positive element} we extend Nagy's characterization to the setting of atomic von Neumann algebras by showing that the following statements are equivalent for every positive norm-one element $a$ in
an atomic von Neumann algebra $M$ (in particular when $M=B(H)$, where $H$ is an arbitrary complex Hilbert space): \begin{enumerate}
[$(a)$] \item $a$ is a projection; \item $Sph^+_{M} \left( Sph^+_{M}(a) \right) =\{a\}$.
\end{enumerate}

We shall also explore whether the above characterization also holds when $M$ is replaced with $K(H)$, the space of all compact operators on a complex Hilbert space $H$. Our conclusion in this case is the following: Let $H_2$ be a separable complex Hilbert space, and suppose that $a$ is a positive norm-one element in $K(H_2)$. Then the following statements are equivalent: \begin{enumerate}
[$(a)$] \item $a$ is a projection; \item $Sph^+_{K(H_2)} \left( Sph^+_{K(H_2)}(a) \right) =\{a\}$.
\end{enumerate}

When $H$ is a finite-dimensional complex Hilbert space Nagy computed in \cite{Nagy2017} the second unit sphere around a positive element in the unit sphere of $B(H)^+$, and showed that the identity $$ Sph^+_{B(H)} \left( Sph^+_{B(H)}(a) \right) =\left\{ b\in S(B(H)^+) : \begin{array}{c}
                                                                                           \hbox{Fix}(a) \subseteq \hbox{Fix} (b), \\
                                                                                            \hbox{ and } \ker(a)\subseteq \ker(b)
                                                                                         \end{array} \right\}$$
holds for every element $a$ in $S(B(H)^+),$ where for each $a$ in $S(B(H)^+)$ we set $\hbox{Fix}(a)=\{\xi\in H : a(\xi) = \xi\},$ (see the beginning of the proof of \cite[Claim 1]{Nagy2017}). In Theorem \ref{t bi spherical set K(H)} we establish a generalization of this fact to he setting of compact operators. We prove that if $H_2$ is a separable infinite-dimensional complex Hilbert space, then the identity $$Sph^+_{K(H_2)} \left( Sph^+_{K(H_2)}(a) \right) =\left\{ b\in S(K(H_2)^+) : \!\! \begin{array}{c}
    s_{_{K(H_2)}} (a) \leq s_{_{K(H_2)}} (b),  \hbox{ and }\\
     \textbf{1}-r_{_{B(H_2)}}(a)\leq \textbf{1}-r_{_{B(H_2)}}(b)
  \end{array}\!\!
 \right\},$$ holds for every $a$ in the unit sphere of $K(H_2)^+$, where $r_{_{B(H_2)}}(a)$ and $s_{_{K(H_2)}} (a)$ stand for the range and support projections of $a$ in $B(H_2)$ and $K(H_2)$, respectively.\smallskip

As we have already commented at the beginning of this introduction, the characterization obtained by Nagy in \eqref{eq bi sphere around a point} is one of the key results to establish that every surjective isometry $\Delta: S(M_n(\mathbb{C})^+) \to S(M_n(\mathbb{C})^+)$ admits an extension to a surjective real linear or complex linear isometry on $M_n(\mathbb{C})$ (see \cite[Theorem]{Nagy2017}). Another related results are known when $M_n(\mathbb{C})= B(\ell_2^n)$ is replaced with the space $(C_p(H), \|\cdot\|_p)$ of all $p$-Schatten-von Neumann operators on a complex Hilbert space $H$, with $1\leq p< \infty$. L. Moln{\'a}r and W. Timmermann proved that for every complex Hilbert space $H$, every surjective isometry $\Delta : S(C_1(H)^+) \to S(C_1(H)^+)$ can be extended to a surjective complex linear isometry on $C_1(H)$. Nagy shows in \cite[Theorem 1]{Nagy2013} that the same conclusion remains true for every $1<p <\infty$.\smallskip

The results commented in the previous paragraph are subtle variants of the so-called Tingley's problem. This problem asks whether every surjective isometry between the unit spheres of two Banach spaces $X$ and $Y$ admits an extension to a surjective real linear isometry from $X$ onto $Y$. Tingley's problem remains open after thirty years. However, in what concerns operator algebras, certain positive solutions to this problem have been recently established in the setting of finite-dimensional C$^*$-algebras and finite von Neumann algebras \cite{Tan2017, Tan2017b}, spaces of compact linear operators and compact C$^*$-algebras \cite{PeTan16}, $B(H)$ spaces \cite{FerPe17b}, von Neumann algebras \cite{FerPe17d}, spaces of trace class operators \cite{FerGarPeVill17}, preduals of von Neumann algebras \cite{Mori2017}, and spaces of $p$-Schatten von Neumann operators with $2<p<\infty$ \cite{FerJorPer2018}. The reader is referred to the survey \cite{Pe2018} for additional details.\smallskip

After completing the description of all surjective isometries on $S(M_n(\mathbb{C})^+)$, Nagy conjectured that a similar result should also hold for surjective  surjective isometries on $S(B(H)^+)$, where $H$ is an arbitrary complex Hilbert space (see \cite[\S 3]{Nagy2017}). The results presented in this note are a first step towards a proof of Nagy's conjecture.

\section{The results}\label{sec: characterization of projections}

Let us fix some notation. Along the paper, the closed unit ball and the dual space of a Banach space $X$ will be denoted by $\mathcal{B}_{_X}$ and $X^*$, respectively. Given a subset $B\subset X,$ we shall write $\mathcal{B}_{_B}$ for $\mathcal{B}_{_X}\cap B$.\smallskip

The cone of positive elements in a C$^*$-algebra $A$ will be denoted by $A^+$, while the symbol  $(A^*)^+$ will stand for the set of positive functionals on $A$. A \emph{state} of $A$ is a positive functional in $S(A^*)$. The set of states of $A$ will be denoted by $\mathcal{S}_A$. It is well known that $\mathcal{B}_{_{(A^*)^+}} = \mathcal{B}_{_{A^*}}\cap (A^*)^+$ is a weak$^*$-closed convex subset of $\mathcal{B}_{_{A^*}}$. The set of \emph{pure states} of $A$ is precisely the set $\partial_e (\mathcal{B}_{_{(A^*)^+}})$ of all extreme points of $\mathcal{B}_{_{(A^*)^+}}$ (see \cite[\S 3.2]{Ped}).\smallskip



Suppose $a$ is a positive element in the unit sphere of a von Neumann algebra $M$. The \emph{range projection} of $a$ in $M$ (denoted by $r(a)$) is the smallest projection $p$ in $M$ satisfying $a p = a$. It is known that the sequence $\left( (1/n \textbf{1}+a)^{-1} a \right)_n$ is monotone increasing to $r(a)$, and hence it converges to $r(a)$ in the weak$^*$-topology of $M$. Actually, $r(a)$ also coincides with the weak$^*$-limit of the sequence $(a^{1/n})_{n}$ in $M$ (see \cite[2.2.7]{Ped}).
It is also known that the sequence $(a^{n})_{n}$ converges to a projection $s(a)=s_{_{M}}(a)$ in $M,$ which is called the \emph{support projection} of $a$ in $M$. Unfortunately, the support projection of a norm-one element in $M$ might be zero. For example, let $\{\xi_n : n\in \mathbb{N}\}$ denote an orthonormal basis of $\ell_2$, and let $a$ be the positive operator in $B(\ell_2)$ given by $\displaystyle a=\sum_{m=1}^{\infty} \frac{m-1}{m} p_m$, where, for each $m$, $p_m$ is the rank one projection $\xi_m\otimes \xi_m$. It is not hard to check that $s_{B(\ell_2)}(a) = 0$.\smallskip

Elements $a,b$ in a C$^*$-algebra $A$ are called orthogonal (written $a\perp b$) if $a b^* = b^* a =0$. It is known that $ \|a+ b\| =\max\{\|a\|,\|b\| \},$ for every $a,b\in A$ with $a\perp b$. Clearly, self-adjoint elements $a,b\in A$ are orthogonal if and only if $a b =0$.\smallskip

We recall some geometric properties of C$^*$-algebras. Let $p$ be a projection in a unital C$^*$-algebra $A$. Suppose that $x\in S(A)$ satisfies $pxp =p,$ then \begin{equation}\label{eq Peirce 2 norm one}\hbox{$x = p + (\textbf{1}-p) x (\textbf{1}-p)$, \ \ }
 \end{equation} (see, for example, \cite[Lemma 3.1]{FerPe18}).
Another property needed later reads as follows:  Suppose that $b\in A^+$ satisfies $pbp =0,$  then \begin{equation}\label{eq pbp =0 implies p perp b for p positive}\hbox{ $p b = b p =0$, equivalently, $p \perp b$.}
 \end{equation} To see this property let us take a positive $c\in A$ satisfying $c^2 = b$. The identity $0\leq (p c)(p c)^* = p c^2 p = p b p =0$ and the Gelfand-Naimark axiom imply that $ pc = c p=0,$ and hence $p b = p c^2 =0 = c^2 p = b p$. \smallskip

A non-zero projection $p$ in a C$^*$-algebra $A$ is called minimal if $p A p = \mathbb{C} p$. A von Neumann algebra $M$ is called atomic if it coincides with the weak$^*$ closure of the linear span of its minimal projections. It is known from the structure theory of von Neumann algebras that every atomic von Neumann algebra $M$ can be written in the form $\displaystyle M = \bigoplus_j^{\ell_{\infty}} B(H_{j}),$ where each $H_j$ is a complex Hilbert space (compare \cite[\S 2.2]{S} or  \cite[\S V.1]{Tak}).\smallskip

Let $p$ be a non-zero projection in an atomic von Neumann algebra $\displaystyle M=\bigoplus_j^{\ell_{\infty}} B(H_{j})$. In this case we can always find a family $(q_{\lambda})$ of mutually orthogonal minimal projections in $M$ such that $\displaystyle p = \hbox{w$^*$-}\sum_{\lambda} q_{\lambda}$ (compare \cite[Definition 1.13.4]{S}). Furthermore, $p$ is the least upper bound of the set of all minimal projections in $M$ which are smaller than or equal to $p$.\smallskip

The bidual, $A^{**}$, of a C$^*$-algebra $A$ is a von Neumann algebra whose predual contains an abundant collection of pure states of $A$. This geometric advantage implies that the support projection in $A^{**}$ of every element in $S(A^+)$ is a non-zero projection. Namely, if $a$ lies in $S(A^+)$ it is well known that we can find a pure state $\phi\in\partial_e (\mathcal{B}_{_{(A^*)^+}})$ satisfying $\phi (a) =1$. Pure states in $A^*$ are in one-to-one correspondence with minimal projections in $A^{**}$, more concretely, for each $\phi\in\partial_e (\mathcal{B}_{_{(A^*)^+}})$ there exists a unique minimal partial isometry $p_{\phi}\in A^{**}$ satisfying $\phi (p_{\phi}) =1,$ and $p_{\phi} x p_{\phi} = \phi (x) p_{\phi}$ for all $x\in M$ (see \cite[Proposition 3.13.6]{Ped}). The projection $p_{\phi}$ is called the \emph{support projection} of $\phi$. Since $A$ is weak$^*$-dense in $A^{**},$ and the product of the latter von Neumann algebra is separately weak$^*$-continuous (see \cite[Proposition 3.6.2 and Remark 3.6.5]{Ped} or \cite[Theorem 1.7.8]{S}), it can be easily seen that every minimal projection in $A$ is minimal in $A^{**}$.\smallskip

Let $a$ be a positive norm-one element in a C$^*$-algebra $A$. Let us take an state $\phi\in \mathcal{S}_{A}$ satisfying $\phi (a) =1$ (compare \cite[Proposition 1.5.4 and its proof]{S}). The set $\{\psi\in \mathcal{B}_{_{(A^*)^+}} : \psi (a) =1\}$ is a non-empty weak$^*$ closed convex subset of $\mathcal{B}_{_{A^*}}$. By the Krein-Milman theorem there exists $\varphi \in \partial_e(\mathcal{B}_{_{(A^*)^+}})$ belonging to the previous set, and hence $\varphi (a) =1$. We consider the support projection $p_{\varphi}$ of $\varphi$ in $A^{**}$, which is a minimal projection. The condition $\varphi (a)=1$ implies $p_{\varphi} = p_{\varphi} a p_{\varphi}$, and \eqref{eq Peirce 2 norm one} assures that $ a= p_{\varphi} + (\textbf{1}-p_{\varphi}) a(\textbf{1}-p_{\varphi})$, and thus $0\neq p_{\varphi}\leq s_{_{A^{**}}} (a)$. We can therefore deduce that \begin{equation}\label{eq support in bidual is non-null} s_{_{A^{**}}} (a) \neq 0, \hbox{ for all } a\in S(A^+).
\end{equation}

In order to recall the connections with Nagy's paper, we observe that, given a norm-one positive operator $a$ in $B(H)$, we denote $\hbox{Fix}(a)=\{\xi\in H : a(\xi) = \xi\},$ and we write $p_a$ for the projection of $H$ onto Fix$(a)$. Since $a = p_a + (\textbf{1}-p_a) a (\textbf{1}-p_a)$, it follows that $p_a$ is smaller than or equal to the support projection of $a$ in $B(H)^{**}$. In some cases, $p_a$ may be zero while $s_{_{B(H)^{**}}} (a)\neq 0$. When $H$ is finite dimensional $p_a$ and $s(a)$ coincide. If we take a positive norm-one element in the space $K(H)$ of all compact operators on $H$, the element $s_{_{B(H)}} (a)= s_{_{K(H)^{**}}} (a)=p_a$ is a (non-zero) finite rank projection and lies in $K(H)$. We shall write $s_{_{K(H)}} (a)$ for the projection $s_{_{B(H)}} (a)$.\smallskip

If $p$ is a non-zero projection in a C$^*$-algebra $A$ then \begin{equation}\label{eq new 0711} \hbox{for each $a$ in $S(A^+)$ such that $p\leq a$, we have $a = p + (\textbf{1}-p) a (\textbf{1}-p)$.}\end{equation} Namely, under the above hypothesis, we also have $p\leq a$ in the von Neumann algebra $A^{**}$. It follows that $p\leq s_{_{A^{**}}} (a)\leq a$, and hence $s_{_{A^{**}}} (a)-p$ is a projection in $A^{**}$ which is orthogonal to $p$. Since $a = s_{_{A^{**}}} (a) + (\textbf{1}- s_{_{A^{**}}} (a) ) a (\textbf{1}-s_{_{A^{**}}} (a))$, we have $p a p  = p s_{_{A^{**}}} (a) p = p$, and thus $a = p + (\textbf{1}-p) a (\textbf{1}-p)$ (compare \eqref{eq Peirce 2 norm one}). \smallskip

It is part of the folklore in the theory of C$^*$-algebras that the distance between two positive elements $a,b$ in the closed unit ball of a C$^*$-algebra $A$ is bounded by one. Namely, since $-\textbf{1}\leq -b \leq a-b \leq a \leq \textbf{1}$, we deduce that $\|a -b\| \leq 1$.\smallskip

In our first result, which is an infinite-dimensional version of \cite[Corollary]{Nagy2017}, we establish a precise description of those pairs of elements in $S(A^+)$ whose distance is exactly one.

\begin{lemma}\label{l when distance is exactly one} Let $A$ be a C$^*$-algebra, and let $a,b$ be elements in $S(A^+)$. Then $\|a-b\| = 1$ if and only if there exists a minimal projection $e$ in $A^{**}$ such that one of the following statements holds:
\begin{enumerate}[$(a)$]\item $e\leq a$ and $e\perp b$ in $A^{**}$;
\item $e\leq b$ and $e\perp a$ in $A^{**}$.
\end{enumerate}
\end{lemma}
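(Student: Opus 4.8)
The plan is to prove the two implications separately; the ``if'' direction reduces to a one-line computation, and the ``only if'' direction is where the real work lies.

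\emph{The ``if'' direction.} Suppose we are handed a minimal projection $e$ in $A^{**}$ with $e\le a$ and $e\perp b$ (alternative $(a)$; the case $(b)$ is symmetric, interchanging $a$ and $b$). First I would note that $0\le e\le a\le \textbf{1}$ in $A^{**}$ forces $e=e^3\le eae\le e\textbf{1}e=e$, hence $eae=e$, while $e\perp b$ gives $ebe=0$; therefore $e(a-b)e=e$, and so $\|a-b\|\ge \|e(a-b)e\|=\|e\|=1$. Combined with the folklore bound $\|a-b\|\le 1$ (recalled above, and valid already in $A^{**}$, since $-\textbf{1}\le -b\le a-b\le a\le \textbf{1}$), this yields $\|a-b\|=1$.

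\emph{The ``only if'' direction.} Assume $\|a-b\|=1$. The strategy is to produce a pure state $\varphi$ of $A$ with $\varphi(a)=1$ and $\varphi(b)=0$ and then set $e:=p_\varphi$, the support projection of $\varphi$ in $A^{**}$, which is a minimal projection with $exe=\varphi(x)e$ for every $x\in A^{**}$. Granting such a $\varphi$, alternative $(a)$ drops out of the Peirce-type identities already recorded: $\varphi(a)=1$ gives $eae=e$, whence \eqref{eq Peirce 2 norm one} gives $a=e+(\textbf{1}-e)a(\textbf{1}-e)\ge e$, that is, $e\le a$ in $A^{**}$; and $\varphi(b)=0$ gives $ebe=0$, so \eqref{eq pbp =0 implies p perp b for p positive} gives $e\perp b$ in $A^{**}$. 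So everything reduces to the construction of $\varphi$. For that I would first use the Hahn--Banach theorem to pick $f\in S(A^*)$ with $f(a-b)=\|a-b\|=1$, replace $f$ by its self-adjoint part (this does not change the value at $a-b$, since $a-b$ is self-adjoint), and take the Jordan decomposition $f=f_+-f_-$ with $f_\pm\in (A^*)^+$ and $\|f_+\|+\|f_-\|=\|f\|=1$. Since $a,b\in A^+$, the chain $1=f(a-b)=f_+(a)-f_+(b)-f_-(a)+f_-(b)\le f_+(a)+f_-(b)\le \|f_+\|+\|f_-\|=1$ must be an equality throughout, forcing $f_+(b)=f_-(a)=0$, $f_+(a)=\|f_+\|$ and $f_-(b)=\|f_-\|$. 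At least one of $\|f_+\|,\|f_-\|$ is positive; if $\|f_+\|>0$, then $\psi:=f_+/\|f_+\|$ is a state with $\psi(a)=1$ and $\psi(b)=0$, and if $\|f_-\|>0$ the same reasoning with $a$ and $b$ interchanged leads to alternative $(b)$.

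The step I expect to be the genuine obstacle is the upgrade from the state $\psi$ to a \emph{pure} state with the same two values. My plan here is to look at $C:=\{\phi\in \mathcal{B}_{_{(A^*)^+}} : \phi(a)=1 \hbox{ and } \phi(b)=0\}$, which is non-empty (it contains $\psi$), convex and weak$^*$-compact; using that $\phi(a)\le \|\phi\|\le 1$ and $\phi(b)\ge 0$ for every $\phi\in \mathcal{B}_{_{(A^*)^+}}$, one checks that $C$ is in fact a \emph{face} of $\mathcal{B}_{_{(A^*)^+}}$. Then, by the Krein--Milman theorem, $C$ has an extreme point $\varphi$, which, being an extreme point of a face, is automatically an extreme point of $\mathcal{B}_{_{(A^*)^+}}$; and $\varphi\ne 0$ because $\varphi(a)=1$, so $\varphi$ is a pure state. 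With $\varphi$ in hand, the argument closes as explained above.
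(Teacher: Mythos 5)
Your proof is correct and follows essentially the same route as the paper: produce a pure state $\varphi$ with $\{\varphi(a),\varphi(b)\}=\{0,1\}$, take its support projection $e=p_\varphi$, and conclude via \eqref{eq Peirce 2 norm one} and \eqref{eq pbp =0 implies p perp b for p positive}. The only cosmetic differences are that the paper obtains $\varphi$ directly as a pure state with $|\varphi(a-b)|=1$ (norm attainment of a self-adjoint element at a pure state, argued as for \eqref{eq support in bidual is non-null}) and then reads off the dichotomy from $0\le\varphi(a),\varphi(b)\le 1$, whereas you reach the same point via Hahn--Banach, the Jordan decomposition and your face/Krein--Milman upgrade; and in the easy direction the paper evaluates the pure state associated with $e$ at $a-b$ instead of your compression estimate $\|e(a-b)e\|=1$.
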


\begin{proof} Let us first assume that  $\|a-b\| = 1$. Arguing as in the proof of \eqref{eq support in bidual is non-null}, we can find $\varphi \in \partial_e(\mathcal{B}_{_{(A^*)^+}})$ such that $|\varphi (a-b) |=1$. Since $0\leq \varphi (a), \varphi (b) \leq 1$, we can deduce that precisely one of the following holds: \begin{enumerate}[$(a)$]\item $\varphi (a)=1$ and $\varphi (b)=0$;
\item $\varphi (b)=1$ and $\varphi (a)=0$.
\end{enumerate} Let $e=p_{\varphi}$ be the minimal projection in $A^{**}$ associated to the pure state $\varphi$. In case $(a)$ we know that $e a e=e$ and $e be =0$. Thus, by \eqref{eq Peirce 2 norm one} and \eqref{eq pbp =0 implies p perp b for p positive} it follows that $a = e + (\textbf{1}-e) a (\textbf{1}-e)\geq e$ and $b \perp e$ in $A^{**}$. Similar arguments show that in case $(b)$ we get $e\leq b$ and $e\perp a$ in $A^{**}$.\smallskip

Suppose now that we can find a minimal projection $e$ in $A^{**}$ satisfying $(a)$ or $(b)$ in the statement of the lemma. We shall only consider the case in which statement $(a)$ holds, the other case is identical. Let $\varphi$ be the pure state in $A^*$ associated with $e$. Since $a = e + (\textbf{1}-e) a (\textbf{1}-e)$ and $b =  (\textbf{1}-e) b (\textbf{1}-e)$ in $A^{**}$ we obtain $\varphi (a-b )= \varphi (e) =1\leq \|a-b\| \leq 1.$
\end{proof}

We are now in position to establish a sufficient condition in terms of the set $Sph^+_{A} \left( Sph^+_{A}(a) \right)$, to guarantee that a positive norm-one element $a$ in a C$^*$-algebra $A$ is a projection.

\begin{proposition}\label{p necessary condition for projection in terms of the sphere around a positive element} Let $A$ be a C$^*$-algebra, and let $a$ be a positive norm-one element in $A$. Suppose $Sph^+_{A} \left( Sph^+_{A}(a) \right) =\{a\}$. Then $a$ is a projection.
\end{proposition}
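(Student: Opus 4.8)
The plan is to argue by contraposition using a family of functional–calculus perturbations of $a$. Concretely, I would show that for \emph{every} continuous function $f\colon [0,1]\to [0,1]$ with $f(0)=0$ and $f(1)=1$ one has $f(a)\in Sph^+_{A}\!\left(Sph^+_{A}(a)\right)$; the hypothesis then forces $f(a)=a$ for all such $f$, and from this it is immediate that $\sigma(a)\subseteq\{0,1\}$, i.e.\ $a$ is a projection.

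First I would record the elementary facts. Since $a\in S(A^+)$ we have $1=\max\sigma(a)\in\sigma(a)$, so for $f$ as above $f(a)\in C^*(a)\subseteq A$ satisfies $f(a)\geq 0$ and $\|f(a)\|=\sup_{t\in\sigma(a)}f(t)=1$, hence $f(a)\in S(A^+)$. The heart of the argument is then the following: fix $x\in Sph^+_{A}(a)$, so that $\|a-x\|=1$, and invoke Lemma~\ref{l when distance is exactly one} to obtain a minimal projection $e$ in $A^{**}$ with either $(a)$ $e\leq a$ and $e\perp x$, or $(b)$ $e\leq x$ and $e\perp a$, all in $A^{**}$. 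In case $(a)$, from $e\leq a\leq \mathbf 1$ and \eqref{eq new 0711} we get $a=e+(\mathbf 1-e)a(\mathbf 1-e)$ in $A^{**}$; thus $e$ and $a$ commute with $ea=e$, so $ep(a)=p(1)e$ for polynomials $p$ and, by Weierstrass approximation on $\sigma(a)\subseteq[0,1]$, $ef(a)=f(1)e=e$, whence $f(a)=e+(\mathbf 1-e)f(a)(\mathbf 1-e)\geq e$; together with $e\perp x$, Lemma~\ref{l when distance is exactly one}$(a)$ gives $\|f(a)-x\|=1$. In case $(b)$, $e\perp a$ yields $a^n e=0$ for all $n\geq 1$, hence $f(a)e=0$ (this is where $f(0)=0$ is used, so that $f(a)\in C^*(a)$), i.e.\ $e\perp f(a)$; with $e\leq x$, Lemma~\ref{l when distance is exactly one}$(b)$ gives $\|f(a)-x\|=1$. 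So $\|f(a)-x\|=1$ for every $x\in Sph^+_{A}(a)$ (the condition being vacuous if that set is empty), which is exactly the statement $f(a)\in Sph^+_{A}\!\left(Sph^+_{A}(a)\right)$.

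To conclude, suppose $a$ is not a projection. Then $\sigma(a)\not\subseteq\{0,1\}$, so there is $t_0\in\sigma(a)$ with $0<t_0<1$; choose a continuous $f\colon[0,1]\to[0,1]$ with $f(0)=0$, $f(1)=1$ and $f(t_0)\neq t_0$. By the spectral mapping theorem $\|f(a)-a\|\geq |f(t_0)-t_0|>0$, contradicting the equality $f(a)=a$ forced by the hypothesis. Hence $\sigma(a)\subseteq\{0,1\}$, so $a^2=a=a^*$ by functional calculus, and $a$ is a projection.

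The step I expect to demand the most care is the verification that the normalized functional calculus preserves the domination/orthogonality relations supplied by Lemma~\ref{l when distance is exactly one} — that $e\leq a$ forces $e\leq f(a)$ and $e\perp a$ forces $e\perp f(a)$ — since this is precisely where the normalizations $f(0)=0$, $f(1)=1$ and the passage to the bidual $A^{**}$ do the work; the remaining ingredients are routine functional calculus and spectral mapping.
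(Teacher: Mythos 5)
Your proposal is correct and follows essentially the same route as the paper: you build the test elements $f(a)$ with $f(0)=0$, $f(1)=1$, $0\leq f\leq 1$ (the paper's functions $c\in C_0(\sigma(a))$), use Lemma~\ref{l when distance is exactly one} to transfer the relations $e\leq a$ and $e\perp a$ to $f(a)$, conclude $f(a)\in Sph^+_{A}\left(Sph^+_{A}(a)\right)=\{a\}$, and force $\sigma(a)\subseteq\{0,1\}$. The only (harmless) divergence is that you verify $e\leq f(a)$ via \eqref{eq new 0711} and polynomial approximation instead of the paper's comparison of support projections in $A^{**}$.
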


\begin{proof} Let $\sigma(a)$ denote the spectrum of $a$. We identify the C$^*$-subalgebra of $A$ generated by $a$ with the commutative C$^*$-algebra $C_0(\sigma(a))$ of all continuous functions on $\sigma(a)\cup\{0\}$ vanishing at $0$. Fix an arbitrary function $c\in C_0(\sigma(a))$ with $0\leq c\leq 1$, $c(0) =0$ and $c(1) =1$. We claim that any such element $c$ satisfies the following properties: \begin{enumerate}[$(P1)$]\item If $q$ is a minimal projection in $A^{**}$ with $q\leq a$, then $q\leq c$ in $A^{**}$;
\item If $q$ is a projection in $A^{**},$ with $q\perp a =0$ then $q c=0$.
\end{enumerate}
We shall next prove the claim. $(P1)$ Let $q$ be a minimal projection in $A^{**}$ with $q\leq a.$ Let $\varphi\in \partial_e(\mathcal{B}_{_{(A^*)^+}})$ be a pure state of $A$ satisfying $\varphi (q)= 1$. In this case $a = q + (\textbf{1}-q) a(\textbf{1}-q)$ in $A^{**}$. This proves that $s_{_{A^{**}}} (a) = q + s_{_{A^{**}}} ((\textbf{1}-q) a(\textbf{1}-q)) \geq q$ in $A^{**}$. The element $c$ has been defined to satisfy $s_{_{C_0(\sigma(a))^{**}}} (a) \leq s_{_{C_0(\sigma(a))^{**}}} (c)$. Since $C_0(\sigma(a))^{**}$ can be identified with the weak$^*$ closure of $C_0(\sigma(a))^{**}$ in $A^{**}$, we can actually conclude that $q\leq s_{_{A^{**}}} (a) = s_{_{C_0(\sigma(a))^{**}}} (a) \leq s_{_{C_0(\sigma(a))^{**}}} (c)= s_{_{A^{**}}} (c)$. This implies that $\varphi (c) = 1$ and hence $q\leq c$ in $A^{**}$.\smallskip

$(P2)$ Any element in $A^{**}$ which is orthogonal to $a$ must be orthogonal to every element in $C_0(\sigma(a)),$ because the latter is the C$^*$-subalgebra of $A$ generated by $a$. This finishes the proof of the claim.\smallskip

By Lemma \ref{l when distance is exactly one}, an element $x$ lies in $Sph^+_{A}(a)$ if and only if there exists a minimal projection $e$ in $A^{**}$ such that one of the following statements holds:
\begin{enumerate}[$(a)$]\item $e\leq a$ and $e\perp x$ in $A^{**}$;
\item $e\leq x$ and $e\perp a$ in $A^{**}$.
\end{enumerate}

In case $(a)$, $e\perp x$ and $e\leq c$ by $(P1)$, and Lemma \ref{l when distance is exactly one} implies that $\|x-c\|=1$.\smallskip

In case $(b)$, $e\leq x$ and $e\perp a,$ and hence $e\perp c$ by $(P2)$. Lemma \ref{l when distance is exactly one} implies that $\|x-c\|=1$.\smallskip

We have proved that, any function $c\in C_0(\sigma(a))$ with $0\leq c\leq 1$, $c(0) =0$ and $c(1) =1$ belongs to $Sph^+_{A} \left( Sph^+_{A}(a) \right)=\{a\},$ which forces to $\sigma(a) =\{0,1\},$ and hence $a$ is a projection.
\end{proof}

The promised characterization of non-zero projections in an atomic von Neumann algebra is established next.

\begin{theorem}\label{t characterization of projection in terms of the sphere around a positive element} Let $M$ be an atomic von Neumann algebra, and let $a$ be a positive norm-one element in $M$. Then the following statements are equivalent: \begin{enumerate}
[$(a)$] \item $a$ is a projection; \item $Sph^+_{M} \left( Sph^+_{M}(a) \right) =\{a\}$.
\end{enumerate}
\end{theorem}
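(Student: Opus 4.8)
The plan is to prove the non-trivial implication $(a)\Rightarrow(b)$; the converse $(b)\Rightarrow(a)$ is already Proposition \ref{p necessary condition for projection in terms of the sphere around a positive element}. So assume $a=p$ is a non-zero projection in $M=\bigoplus_j^{\ell_\infty} B(H_j)$, and we must show that $Sph^+_M(Sph^+_M(p))=\{p\}$. The containment $p\in Sph^+_M(Sph^+_M(p))$ is automatic once we know $Sph^+_M(p)\neq\emptyset$ and that every $x\in Sph^+_M(p)$ satisfies $\|x-p\|=1$, which is built into the definition; so the real content is the reverse inclusion: any $b\in S(M^+)$ with $\|b-x\|=1$ for \emph{all} $x\in Sph^+_M(p)$ must equal $p$.

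First I would get a usable description of $Sph^+_M(p)$. By Lemma \ref{l when distance is exactly one}, since $p$ is a projection, $x\in Sph^+_M(p)$ iff there is a minimal projection $e$ in $M^{**}$ with either ($e\le p$ and $e\perp x$) or ($e\le x$ and $e\perp p$). Here I would exploit that $M$ is an atomic von Neumann algebra: $M^{**}=M\oplus(M^{**}\ominus M)$ as a direct sum of von Neumann algebras, and the central summand $M$ carries all its own minimal projections, so for elements of $M$ the support projection computed in $M$ agrees with the one in $M^{**}$ and minimal projections below elements of $M$ can be taken in $M$ itself. Thus $Sph^+_M(p)$ contains, in particular, every $x\in S(M^+)$ with $x\perp q$ for some minimal projection $q\le p$ (take $e=q$), and every $x\in S(M^+)$ supported under $\mathbf 1-p$ together with a minimal projection $e\le x$ (which exists in $M$ by \eqref{eq support in bidual is non-null} applied inside the corner $(\mathbf 1-p)M(\mathbf 1-p)$, when $p\neq\mathbf 1$; the case $p=\mathbf 1$ is handled separately and is easy since then $Sph^+_M(\mathbf 1)$ consists of all $x\in S(M^+)$ with a minimal projection $e\le x$, i.e.\ essentially everything).

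The heart of the argument is then the following: take $b\in S(M^+)$ lying in $Sph^+_M(Sph^+_M(p))$ and show $\mathrm{Fix}$-type and $\ker$-type containments forcing $b=p$. Concretely I would argue in two steps. Step 1: show $p\le b$. Suppose not; then there is a minimal projection $q\le p$ in $M$ with $q\not\le b$, equivalently $q\not\perp(\mathbf 1-s_{M^{**}}(b))$ fails in the right way — more cleanly, pick a pure state $\varphi$ with $\varphi(q)=1$ and $\varphi(b)<1$. Using $q$ I can build an $x\in Sph^+_M(p)$ (e.g.\ $x$ supported on the orthocomplement of $q$, chosen so that $\|x-b\|<1$, using that $\varphi(b)<1$ gives room: take $x=\mathbf1 - q$ if that has norm one, or a suitable positive contraction equal to $b$ off a neighbourhood of the relevant spectral part), contradicting $\|x-b\|=1$. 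Step 2: show $\mathbf 1-p\le \mathbf 1-s_{M^{**}}(b)$, i.e.\ $b\perp(\mathbf 1-p)$, equivalently $b=pbp$; if not, there is a minimal projection $e\le b$ with $e\perp p$, hence $e\in Sph^+_M(\text{something})$... more precisely such $e$ itself, or a perturbation of it, lies in $Sph^+_M(p)$ and is \emph{not} at distance $1$ from $b$ (since $e\le b$ forces $\|b-e\|\le\|b\|-0<1$ when $b\neq e$, using positivity and $e\le b\le\mathbf1$), again a contradiction. Combining Steps 1 and 2 with \eqref{eq new 0711} gives $b=p+(\mathbf1-p)b(\mathbf1-p)$ and $b=pbp$, forcing $(\mathbf1-p)b(\mathbf1-p)=0$ and $b=p$.

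The main obstacle I anticipate is Step 2, and more generally the careful construction of the auxiliary elements $x\in Sph^+_M(p)$ that are simultaneously forced to be at distance exactly one from $p$ (so they genuinely lie in the first sphere) \emph{and} at distance strictly less than one from a hypothetical bad $b$. In finite dimensions Nagy builds these by hand from spectral projections; in the atomic case one must be careful that the relevant minimal projections and spectral cut-offs stay inside $M$ rather than merely $M^{**}$, that norms of sums like $p+(\mathbf1-p)y(\mathbf1-p)$ are controlled via orthogonality ($\|u+v\|=\max\{\|u\|,\|v\|\}$ for $u\perp v$), and that when $b$ has nontrivial continuous spectrum one chooses $x$ to agree with $b$ on the part of the spectrum away from $0$ and $1$ so that $\|x-b\|$ picks up only the discrepancy near the endpoints, which by Step 1/Step 2 hypotheses can be made $<1$. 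I would also need the observation, used implicitly above, that for $a,b\in S(M^+)$ one has $\|a-b\|<1$ as soon as there is no minimal projection witnessing condition $(a)$ or $(b)$ of Lemma \ref{l when distance is exactly one} — i.e.\ Lemma \ref{l when distance is exactly one} is an exact dichotomy — which lets me convert "distance $<1$" statements into "no separating minimal projection" statements and back.
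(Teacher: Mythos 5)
Your reduction for $(a)\Rightarrow(b)$ (prove $p\le b$ and $b=pbp$, then conclude $b=p$) would suffice if both steps were established, but both sketches have genuine gaps, and Step 2 rests on false claims. From $b\neq pbp$ you infer the existence of a minimal projection $e\le b$ with $e\perp p$; this inference is not valid: for a projection $e$, the relation $e\le b$ forces $b=e+(\mathbf{1}-e)b(\mathbf{1}-e)$ by \eqref{eq Peirce 2 norm one} and \eqref{eq new 0711}, so $e$ must lie in the eigenvalue-one spectral subspace of $b$, and $b$ can fail to be orthogonal to $\mathbf{1}-p$ without any such $e$ existing (take $b=q+\frac12 f$ with $q\le p$ and $f\le\mathbf{1}-p$ minimal). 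The distance estimate you then invoke is also false: $e\le b$ with $e\neq b$ does not give $\|b-e\|<1$, since $b-e=(\mathbf{1}-e)b(\mathbf{1}-e)$ can have norm one (e.g.\ $b=e+f$ with $f$ a projection orthogonal to $e$), so no contradiction is reached. (You also conflate support and range projections: $\mathbf{1}-p\le\mathbf{1}-s_{_{M^{**}}}(b)$ says $s_{_{M^{**}}}(b)\le p$, which is strictly weaker than $b\perp(\mathbf{1}-p)$, i.e.\ $r_{_{M^{**}}}(b)\le p$.) The paper's way around precisely this difficulty is the test element $x_e=e+\frac12(\mathbf{1}-e)$ for each minimal projection $e\le\mathbf{1}-p$ of $M$: it lies in $Sph^+_M(p)$, its range projection is $\mathbf{1}$, so branch $(b)$ of Lemma \ref{l when distance is exactly one} is impossible for the pair $(x_e,b)$, and the only minimal projection dominated by $x_e$ is $e$ itself; hence $\|x_e-b\|=1$ forces $e\perp b$, and taking the supremum over such $e$ gives $(\mathbf{1}-p)\perp b$. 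This idea is missing from your sketch.

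Step 1 is likewise not carried out, and is not needed in the paper's route. The candidate $x=\mathbf{1}-q$ need not satisfy $\|x-b\|<1$: whenever some minimal projection $e\le\mathbf{1}-q$ is orthogonal to $b$, Lemma \ref{l when distance is exactly one} gives $\|(\mathbf{1}-q)-b\|=1$, so there is no contradiction; and the fallback (``a suitable positive contraction equal to $b$ off a neighbourhood of the relevant spectral part'') is exactly the hard construction, since such an $x$ must simultaneously admit a witnessing minimal projection making $\|x-p\|=1$ and avoid both branches of the dichotomy relative to $b$. The paper instead, after establishing $(\mathbf{1}-p)\perp b$ (so $pb=bp=b$), passes to the commutative C$^*$-algebra generated by $b$ and $p$ (with $p$ as unit) and uses explicit functions — the piecewise-linear $c$ of \eqref{eq function c} when $0\in\sigma(b)$, and an affine rescaling $d(t)=\frac{1}{1-m_0}(t-m_0)$ when $b$ is invertible there — to produce elements of $Sph^+_M(p)$ at distance $<1$ from $b$ unless $\sigma(b)\subseteq\{0,1\}$, and finally rules out $b<p$ because then $b$ itself would lie in $Sph^+_M(p)$ at distance $0$ from $b$. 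Some commutative construction of this kind is unavoidable and is absent from your proposal, so as it stands the proof of $(a)\Rightarrow(b)$ is incomplete; the direction $(b)\Rightarrow(a)$ via Proposition \ref{p necessary condition for projection in terms of the sphere around a positive element} is fine and is what the paper does.
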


\begin{proof} $(a)\Rightarrow (b)$ Suppose $a=p$ is a projection. Clearly $$\{p\} \subseteq Sph^+_{M} \left( Sph^+_{M}(p) \right).$$ Let us take $b$ in the set $Sph^+_{M} \left( Sph^+_{M}(p) \right)$. We shall first prove that $\textbf{1}-p \perp b$. If $\textbf{1}-p =0$ there is nothing to prove. Otherwise, let $e$ be a minimal projection in $M$ with $e\leq \textbf{1}-p$. Since $\| e + \frac12 (\textbf{1}-e) -p\| =1,$ we deduce that $\| e + \frac12 (\textbf{1}-e) -b\| =1$.\smallskip

Lemma \ref{l when distance is exactly one} proves the existence of a minimal projection $q\in M^{**}$ such that one of the next statements holds: \begin{enumerate}[$(1)$]\item $q\leq e + \frac12 (\textbf{1}-e)$ and $q\perp b$ in $M^{**}$;
\item $q\leq b$ and $q\perp e + \frac12 (\textbf{1}-e)$ in $M^{**}$.
\end{enumerate} We claim that case $(2)$ is impossible. Indeed, $q\perp e + \frac12 (\textbf{1}-e)$ is equivalent to $q\perp r_{_{M^{**}}} (e + \frac12 (\textbf{1}-e)) =\textbf{1},$ which is impossible. Therefore, only case $(1)$ holds, and thus $q\leq e$. Since $e$ also is a minimal projection in $M^{**}$, we deduce from the minimality of $q$ that $e=q\perp b$.\smallskip

We have shown that for every minimal projection $e$ in $M$ with $e\leq \textbf{1}-p$ we have $e\perp b$. Since $\textbf{1}-p$ is the least upper bound of all minimal projections $q$ in $M$ with $q\leq \textbf{1}-p$ (actually $\displaystyle \textbf{1}-p =\sum_j e_j$ where $\{e_j\}$ is a family of mutually orthogonal minimal projections in $M$), it follows that $\textbf{1}-p \perp b$ (equivalently, $ p b = b p =b$).\smallskip

We shall next prove that $b$ is a projection and $p=b$. Let $\sigma(b)$ be the spectrum of $b$, let $\mathcal{C}$ denote the C$^*$-subalgebra of $M$ generated by $b$ and $p$, and let us identify $\mathcal{C}$ with $C(\sigma(b))$, $b$ with the function $t\mapsto t$, and $p$ with the unit of $\mathcal{C}$. We shall distinguish two cases:\begin{enumerate}[$(i)$] \item $0\notin \sigma(b)$ (that is, $b$ is invertible in $\mathcal{C}$); \item $0\in \sigma(b)$ (that is, $b$ is not invertible in $\mathcal{C}$).
\end{enumerate}

We deal first with case $(i)$. If $0\notin \sigma(b)$, let $m_0$ be the minimum of $\sigma(b)$. If $0<m_0<1$, we consider the function $d\in \mathcal{C}\equiv C(\sigma(b))$ defined by $d(t)=  \frac{1}{1-m_0} (t-m_0)$, ($t\in\sigma(b)$). It is not hard to check that $0\leq \| b-d \| =m_0<1 $ and $\|p-d \| =1,$ which contradicts that $b \in Sph^+_{M} \left( Sph^+_{M}(p) \right)$. Therefore $m_0=1,$ and hence $b$ is invertible with $\sigma(b) =\{1\}$, witnessing that $\textbf{1}=b\leq p \leq \textbf{1}$. We have proved that $b=p =\textbf{1}$.\smallskip

In case $(ii)$, $0\in \sigma(b)$. If there exists $t_0\in \sigma(b)\cap (0,1)$, the function \begin{equation}\label{eq function c} c(t)= \left\{\begin{array}{cc}
                                   0 & \hbox{ if } t\in \sigma(b)\cap [0,t_0]; \\
                                   \frac{1+t_0}{1-t_0} (t-t_0) & \hbox{ if } t\in \sigma(b)\cap [t_0,\frac{1+t_0}{2}]; \\
                                   t & \hbox{ if } t\in \sigma(b)\cap [\frac{1+t_0}{2},1],
                                 \end{array}
 \right.
 \end{equation} defines a positive norm-one element in $c\in C(\sigma(b))$ such that $\| p -c\| =1,$ and $\| b-c\| =t_0<1$. This contradicts that $b \in Sph^+_{M} \left( Sph^+_{M}(p) \right)$. Therefore, $\sigma(b) \subseteq\{0,1\}$, and hence $b$ is a projection. If $b < p$, we get $\| b - b\|=0$ and $\|p -b\| = 1$, contradicting that $b \in Sph^+_{M} \left( Sph^+_{M}(p) \right)$. Therefore $p=b$.\smallskip

 We have shown that $Sph^+_{M} \left( Sph^+_{M}(p) \right) =\{p\}$.\smallskip

The implication $(b)\Rightarrow (a)$ follows from Proposition \ref{p necessary condition for projection in terms of the sphere around a positive element}.
\end{proof}

The next result is a clear consequence of our previous theorem and extends the characterization of projections in $M_n(\mathbb{C})$ established by G. Nagy in the final paragraph of the proof of \cite[Claim 1]{Nagy2017} (compare \eqref{eq bi sphere around a point}).

\begin{corollary}\label{c characterization of projection in terms of the sphere around a positive element B(H)} Let $H$ be an arbitrary complex Hilbert space, and let $a$ be a positive norm-one element in $B(H)$. Then the following statements are equivalent: \begin{enumerate}
[$(a)$] \item $a$ is a projection; \item $Sph^+_{B(H)} \left( Sph^+_{B(H)}(a) \right) =\{a\}$. $\hfill\Box$
\end{enumerate}
\end{corollary}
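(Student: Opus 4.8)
The plan is to obtain this statement as an immediate consequence of Theorem~\ref{t characterization of projection in terms of the sphere around a positive element}. The only thing that needs to be observed is that $B(H)$ satisfies the hypothesis of that theorem, that is, $B(H)$ is an atomic von Neumann algebra. So the proof reduces entirely to this verification, after which one simply invokes the theorem with $M=B(H)$.

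To see that $B(H)$ is atomic, recall that the minimal projections of $B(H)$ are precisely the rank-one projections $\xi\otimes\xi$ with $\xi\in S(H)$, since $(\xi\otimes\xi)\,B(H)\,(\xi\otimes\xi)=\mathbb{C}\,(\xi\otimes\xi)$. The linear span of these rank-one projections contains every finite-rank operator on $H$: indeed, an arbitrary rank-one operator $\eta\otimes\zeta$ is a linear combination of rank-one projections by the polarization identity, and every finite-rank operator is a finite sum of rank-one operators. Since the finite-rank operators are weak$^*$-dense in $B(H)$, it follows that $B(H)$ coincides with the weak$^*$-closure of the linear span of its minimal projections, i.e. $B(H)$ is atomic. (Equivalently, $B(H)$ is the single-block instance of the decomposition $M=\bigoplus_j^{\ell_\infty} B(H_j)$ recalled in Section~\ref{sec: characterization of projections}.)

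With atomicity established, the equivalence $(a)\Leftrightarrow(b)$ is exactly Theorem~\ref{t characterization of projection in terms of the sphere around a positive element} specialized to $M=B(H)$; in fact the implication $(b)\Rightarrow(a)$ does not even use atomicity, being already contained in Proposition~\ref{p necessary condition for projection in terms of the sphere around a positive element}. There is no real obstacle in this corollary: all of the substance sits in the proof of Theorem~\ref{t characterization of projection in terms of the sphere around a positive element}, and the only point here deserving a line of justification is the (standard) fact that $B(H)$ is atomic.
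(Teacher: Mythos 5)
Your proposal is correct and follows exactly the paper's route: the corollary is stated there as an immediate consequence of Theorem~\ref{t characterization of projection in terms of the sphere around a positive element}, applied with $M=B(H)$, which is (as you verify) an atomic von Neumann algebra. Your extra check of atomicity via weak$^*$-density of the span of rank-one projections is a harmless standard addition and changes nothing in substance.
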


It seems natural to ask whether the above corollary remains true if $B(H)$ is replaced with $K(H)$.
For an infinite-dimensional separable complex Hilbert space $H_2$, the conclusion of Theorem \ref{t characterization of projection in terms of the sphere around a positive element} and Corollary \ref{c characterization of projection in terms of the sphere around a positive element B(H)} can be also extended to projections in the space $K(H_2)$. The arguments in the proof of Theorem \ref{t characterization of projection in terms of the sphere around a positive element} actually require a subtle adaptation.

\begin{theorem}\label{t characterization of projection in terms of the sphere around a positive element k(H)} Let $a$ be a positive norm-one element in $K(H_2)$, where $H_2$ is a separable complex Hilbert space. Then the following statements are equivalent: \begin{enumerate}
[$(a)$] \item $a$ is a projection; \item $Sph^+_{K(H_2)} \left( Sph^+_{K(H_2)}(a) \right) =\{a\}$.
\end{enumerate}
\end{theorem}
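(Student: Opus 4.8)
The implication $(b)\Rightarrow(a)$ is immediate from Proposition \ref{p necessary condition for projection in terms of the sphere around a positive element}, so the whole point is $(a)\Rightarrow(b)$, which is what I would prove. First I would observe that we may assume $H_2$ to be infinite-dimensional, since otherwise $K(H_2)=B(H_2)$ and the statement is already contained in Theorem \ref{t characterization of projection in terms of the sphere around a positive element}. So let $p=a$ be a (necessarily finite-rank, non-zero) projection; the inclusion $\{p\}\subseteq Sph^+_{K(H_2)}(Sph^+_{K(H_2)}(p))$ is trivial once one notes that $Sph^+_{K(H_2)}(p)\neq\emptyset$ (it contains, for instance, every rank-one projection orthogonal to $p$), so the real task is to show that an arbitrary $b\in Sph^+_{K(H_2)}(Sph^+_{K(H_2)}(p))$ equals $p$. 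The elementary fact I would record at the outset is that such a $b$, being a norm-one positive compact operator, has $1$ as an eigenvalue of finite multiplicity; hence $\mathrm{Fix}(b)$ is a non-zero finite-dimensional subspace, its orthogonal projection $p_b\,(=s_{K(H_2)}(b))$ is a non-zero finite-rank projection in $K(H_2)$, and every rank-one projection $q$ in $K(H_2)^{**}=B(H_2)$ with $q\leq b$ automatically satisfies $q\leq p_b$.

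The heart of the plan, and the place where the argument of Theorem \ref{t characterization of projection in terms of the sphere around a positive element} genuinely has to be adapted, is to prove $\mathrm{ran}(b)\subseteq pH_2$, equivalently $(\textbf{1}-p)\,b=0$: the test elements $e+\frac12(\textbf{1}-e)$ used there are useless now because they are not compact. Instead I would fix an arbitrary rank-one projection $e$ with $e\perp p$ and test $b$ against the \emph{finite-rank} element $T:=e+\frac12 P$, where $P$ is the orthogonal projection of $H_2$ onto the finite-dimensional subspace $pH_2+\big(\mathrm{Fix}(b)\cap(eH_2)^{\perp}\big)$, so that $P$ is a finite-rank projection with $p\leq P$ and $P\perp e$. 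A routine computation using $e\perp p\leq P$ shows $\|T\|=1$ and $\|T-p\|=1$, so $T\in Sph^+_{K(H_2)}(p)$ and hence $\|b-T\|=1$. Lemma \ref{l when distance is exactly one}, applied in $K(H_2)^{**}=B(H_2)$, then yields a rank-one projection $q$ with either $(1)$ $q\leq b$ and $q\perp T$, or $(2)$ $q\leq T$ and $q\perp b$. Case $(1)$ is impossible: $q\leq b$ forces $qH_2\subseteq\mathrm{Fix}(b)$, while $q\perp T$ forces $qH_2\perp eH_2$ and $qH_2\perp PH_2$; the first two relations place $qH_2$ inside $\mathrm{Fix}(b)\cap(eH_2)^{\perp}\subseteq PH_2$, contradicting $qH_2\perp PH_2$. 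Thus case $(2)$ holds; since the eigenspace of $T$ for its largest eigenvalue $1$ is exactly $eH_2$, the relation $q\leq T\leq\textbf{1}$ forces $q\leq e$, hence $q=e$, and $q\perp b$ becomes $e\perp b$. As $e$ ranges over all rank-one projections orthogonal to $p$ and $\textbf{1}-p$ is the supremum of those, this gives $(\textbf{1}-p)\perp b$, i.e. $b=pbp$.

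It then remains to identify $b$ inside the finite-dimensional corner $pK(H_2)p$, and here I would reproduce the argument from the proof of Theorem \ref{t characterization of projection in terms of the sphere around a positive element}: take the (commutative, unital with unit $p$) C$^*$-subalgebra $\mathcal{C}$ of $K(H_2)$ generated by $b$ and $p$, identify it with $C(\sigma(b))$ and $b$ with $t\mapsto t$. If $0\notin\sigma(b)$ and $m_0:=\min\sigma(b)<1$, the function $d(t)=\frac{1}{1-m_0}(t-m_0)$ gives an element of $Sph^+_{K(H_2)}(p)$ with $\|b-d\|=m_0<1$, contradicting $b\in Sph^+_{K(H_2)}(Sph^+_{K(H_2)}(p))$; hence $\sigma(b)=\{1\}$ and $b=p$. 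If $0\in\sigma(b)$ and some $t_0\in\sigma(b)\cap(0,1)$ existed, the piecewise-linear function $c$ of \eqref{eq function c} would likewise lie in $Sph^+_{K(H_2)}(p)$ with $\|b-c\|=t_0<1$, the same contradiction; so $\sigma(b)\subseteq\{0,1\}$, $b$ is a projection with $b\leq p$, and if $b\neq p$ then $p-b$ is a non-zero projection, whence $\|p-b\|=1$, i.e. $b\in Sph^+_{K(H_2)}(p)$, forcing the absurdity $\|b-b\|=1$. Either way $b=p$. I expect the passage $\mathrm{ran}(b)\subseteq pH_2$ to be the only genuinely new obstacle compared with Theorem \ref{t characterization of projection in terms of the sphere around a positive element}, and its solution to hinge on the finite-dimensionality of $\mathrm{Fix}(b)$ for a norm-one positive compact operator, which is exactly what lets a single finite-rank test element $T$ kill the unwanted alternative in Lemma \ref{l when distance is exactly one}.
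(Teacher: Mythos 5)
Your proposal is correct, and at the one genuinely delicate step it takes a different route from the paper's proof. Both arguments share the same skeleton: $(b)\Rightarrow(a)$ is Proposition \ref{p necessary condition for projection in terms of the sphere around a positive element}, and for $(a)\Rightarrow(b)$ one shows, via Lemma \ref{l when distance is exactly one}, that every minimal projection $e\perp p$ is orthogonal to $b$, so that $b=pbp$ lies in the finite-dimensional corner $pK(H_2)p$, where the argument is finished. The difference is in the compact test elements used to force $e\perp b$. The paper keeps the test element independent of $b$: it writes $\textbf{1}-e=\sum_n v_n$ as a countable sum of minimal projections (this is precisely where separability of $H_2$ is invoked) and tests against $e+\sum_n\frac{1}{2n}v_n$, whose range projection is $\textbf{1}$, so the unwanted alternative in Lemma \ref{l when distance is exactly one} dies because no minimal projection can be orthogonal to it. You instead tailor the test element to $b$: the finite-rank operator $T=e+\frac12 P$, with $P$ the projection onto $pH_2+\bigl(\mathrm{Fix}(b)\cap(eH_2)^{\perp}\bigr)$, and you kill the unwanted alternative geometrically, since $q\leq b$ places $qH_2$ inside $\mathrm{Fix}(b)\cap(eH_2)^{\perp}\subseteq PH_2$ while $q\perp T$ makes $qH_2\perp PH_2$; your verifications ($\|T\|=\|T-p\|=1$, the eigenspace argument giving $q=e$ in the surviving case, and the exploitation of the finite-dimensionality of $\mathrm{Fix}(b)$) are all sound. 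What your choice buys is that no countable decomposition of $\textbf{1}-e$ is needed, so this part of the argument does not use separability at all and would survive for non-separable $H$; what the paper's choice buys is a family of test elements depending only on $e$, which is the same device reused to prove the stronger Theorem \ref{t bi spherical set K(H)}. A second, minor divergence is the endgame: once $b=pbp$, the paper observes that $b\in Sph^+_{pK(H_2)p}\bigl(Sph^+_{pK(H_2)p}(p)\bigr)$ and quotes Nagy's finite-dimensional identity for $pK(H_2)p\cong M_n(\mathbb{C})$, whereas you rerun the commutative spectral argument of Theorem \ref{t characterization of projection in terms of the sphere around a positive element} with the functions $d$ and \eqref{eq function c}; both routes are complete.
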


\begin{proof} When $H_2$ is finite-dimensional the equivalence is proved in \cite[final paragraph of the proof of Claim 1]{Nagy2017}. We can therefore assume that $H_2$ is infinite-dimensional.\smallskip

$(a)\Rightarrow (b)$ We assume first that $a=p\in K(H_2)$ is a projection. We can find a family  $\{q_1,\ldots,q_n\}$ of mutually orthogonal minimal projections in $K(H)$ such that $\displaystyle p = \sum_{j=1}^n q_j$. As before, the inclusion $$\{p\} \subseteq Sph^+_{K(H_2)} \left( Sph^+_{K(H_2)}(p) \right)$$ always holds. Let us take $b$ in the set $Sph^+_{K(H_2)} \left( Sph^+_{K(H_2)}(p) \right)$. Clearly $0\neq \textbf{1}-p\notin K(H_2)$. Let $e$ be a minimal projection in $K(H_2)$ with $e\leq \textbf{1}-p$ in $B(H_2)$. Since $H_2$ is separable, we can pick a maximal family $\{ v_{n} : {n\in \mathbb{N}}\}$ of mutually orthogonal minimal projections in $(\textbf{1}-e) K(H_2) (\textbf{1}-e)$ with $\displaystyle \textbf{1}-e = \sum_{n=1}^\infty v_n$. The element $\displaystyle e + \sum_{n=1}^\infty \frac{1}{2n} v_n$ lies in $S(K(H_2)^+)$ and $\displaystyle \left\| e + \sum_{n=1}^\infty \frac{1}{2n} v_n -p\right\| =1,$ thus, the hypothesis on $b$ implies that $\displaystyle \left\| e + \sum_{n=1}^\infty \frac{1}{2n} v_n -b\right\| =1$.  Lemma \ref{l when distance is exactly one} proves the existence of a minimal projection $q\in K(H_2)^{**}=B(H_2)$ such that one of the next statements holds: \begin{enumerate}[$(1)$]\item $\displaystyle q\leq e + \sum_{n=1}^\infty \frac{1}{2n} v_n$ and $q\perp b$ in $K(H_2)^{**}=B(H_2)$;
\item $q\leq b$ and $\displaystyle q\perp e + \sum_{n=1}^\infty \frac{1}{2n} v_n$ in $K(H_2)^{**}=B(H_2)$.
\end{enumerate}

In case $(2)$, $\displaystyle q\perp e + \sum_{n=1}^\infty \frac{1}{2n} v_n$ and hence $q\perp e, v_n$ for all $n$, which proves that $\displaystyle q\perp e + \sum_{n=1}^\infty v_n =\textbf{1}$ in $B(H_2)$, which is impossible. Therefore, case $(1)$ holds, and thus $q\leq e$. Since $e$ is a minimal projection in $K(H_2)^{**}=B(H_2)$, we deduce from the minimality of $q$ that $e=q\perp b$.\smallskip

We have shown that for every minimal projection $e$ in $B(H_2)$ with $e\leq \textbf{1}-p$ we have $e\perp b$, and then $\textbf{1}-p \perp b$ (equivalently, $ p b = b p =b$).\smallskip

The above arguments show that $b,p\in p K(H_2) p \cong M_n (\mathbb{C})$. Furthermore, every $x\in Sph^+_{p K(H_2) p}(a)$ lies in $Sph^+_{K(H_2)}(a)$ and hence $\| b-x\|=1$, therefore $b$ lies in $Sph^+_{p K(H_2) p}(Sph^+_{p K(H_2) p}(p)).$ It follows from \cite[final paragraph of the proof of Claim 1]{Nagy2017} (see also \eqref{eq bi sphere around a point}) that $Sph^+_{p K(H_2) p}(Sph^+_{p K(H_2) p}(p))=\{p\}$, and hence $b=p$. Therefore, $Sph^+_{K(H_2)} \left( Sph^+_{K(H_2)}(p) \right) =\{p\}$. \smallskip

The implication $(b)\Rightarrow (a)$ follows from Proposition \ref{p necessary condition for projection in terms of the sphere around a positive element}.
\end{proof}

Many consequences can be expected from the characterizations established in Theorem \ref{t characterization of projection in terms of the sphere around a positive element} and Corollary \ref{c characterization of projection in terms of the sphere around a positive element B(H)}. We shall conclude this note with a first application. For a C$^*$-algebra $A$, let $\mathcal{P}roj(A)^*$ denote the set of all non-zero projections in $A$. The next result is an infinite-dimensional version of \cite[Claim 1]{Nagy2017} which proves one of the conjectures posed at the end of the just quoted paper.

\begin{corollary}\label{c first consequence} Let $\Delta : S(M^+)\to S(N^+)$ be a surjective isometry, where $M$ and $N$ are atomic von Neumann algebras. Then $\Delta$ maps $\mathcal{P}roj(M)^*$ onto $\mathcal{P}roj(N)^*$, and the restriction $\Delta|_{\mathcal{P}roj(M)^*} : \mathcal{P}roj(M)^*\to \mathcal{P}roj(N)^*$ is a surjective isometry.
\end{corollary}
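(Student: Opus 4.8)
The plan is to reduce the statement to the characterization of projections provided by Theorem~\ref{t characterization of projection in terms of the sphere around a positive element}, exploiting that the operator $Sph^+$ is defined purely in metric terms. The first step is the observation that, for every subset $E\subseteq S(M^+)$, a surjective isometry $\Delta: S(M^+)\to S(N^+)$ satisfies $\Delta\left(Sph^+_{M}(E)\right) = Sph^+_{N}(\Delta(E))$. Indeed, $x\in Sph^+_{M}(E)$ exactly when $x\in S(M^+)$ and $\|x-b\|=1$ for all $b\in E$; applying $\Delta$, and using that it is distance preserving and maps $S(M^+)$ onto $S(N^+)$, this is equivalent to $\Delta(x)\in S(N^+)$ and $\|\Delta(x)-c\|=1$ for all $c\in\Delta(E)$, that is, $\Delta(x)\in Sph^+_{N}(\Delta(E))$ (the surjectivity of $\Delta$ is what yields the inclusion $\supseteq$). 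Iterating this identity, we obtain
$$\Delta\left(Sph^+_{M}\left(Sph^+_{M}(a)\right)\right) = Sph^+_{N}\left(Sph^+_{N}(\Delta(a))\right) \qquad \hbox{for every } a\in S(M^+).$$

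Next I would combine the previous identity with Theorem~\ref{t characterization of projection in terms of the sphere around a positive element} applied to both $M$ and $N$. Since $\Delta$ is a bijection, $\Delta\left(Sph^+_{M}\left(Sph^+_{M}(a)\right)\right)=\{\Delta(a)\}$ if and only if $Sph^+_{M}\left(Sph^+_{M}(a)\right)=\{a\}$, and therefore the displayed identity shows that $Sph^+_{M}\left(Sph^+_{M}(a)\right)=\{a\}$ holds precisely when $Sph^+_{N}\left(Sph^+_{N}(\Delta(a))\right)=\{\Delta(a)\}$. By the cited theorem this means that $a$ is a projection in $M$ if and only if $\Delta(a)$ is a projection in $N$. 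It remains to observe that $\mathcal{P}roj(M)^*$ is nothing but the set of positive norm-one elements of $M$ which are projections (a non-zero projection is automatically positive and of norm one), i.e. $\mathcal{P}roj(M)^* = \{p\in S(M^+) : p \hbox{ is a projection}\}$, and similarly for $N$. Hence $\Delta$ maps $\mathcal{P}roj(M)^*$ bijectively onto $\mathcal{P}roj(N)^*$; since the restriction of an isometry is an isometry, $\Delta|_{\mathcal{P}roj(M)^*}: \mathcal{P}roj(M)^*\to\mathcal{P}roj(N)^*$ is a surjective isometry.

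I do not expect any serious obstacle: the proof is essentially formal once Theorem~\ref{t characterization of projection in terms of the sphere around a positive element} is in hand. The only point deserving attention is that $Sph^+$ depends on the ambient C$^*$-algebra solely through the metric it induces on $S(M^+)$, so that a surjective isometry transports it faithfully between the two unit spheres; and it is precisely here that the surjectivity of $\Delta$ (equivalently, the fact that $\Delta^{-1}$ is again a surjective isometry) is genuinely used, rather than mere distance preservation.
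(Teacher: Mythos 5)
Your proposal is correct and follows essentially the same route as the paper: it uses the fact that a surjective isometry transports $Sph^+_{M}(E)$ to $Sph^+_{N}(\Delta(E))$ (so the double sphere around $a$ goes to the double sphere around $\Delta(a)$), and then invokes both directions of Theorem~\ref{t characterization of projection in terms of the sphere around a positive element} in $M$ and $N$; your use of the ``if and only if'' in place of the paper's explicit appeal to $\Delta^{-1}$ being a surjective isometry is only a cosmetic difference.
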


\begin{proof} Let $p$ be a non-zero projection in $M$. Applying Theorem \ref{t characterization of projection in terms of the sphere around a positive element} we have $Sph^+_{M} \left( Sph^+_{M}(p) \right) =\{p\}$. Since $\Delta$ is a surjective isometry, the sphere around a set $E\subset S(M^+)$, $Sph^+_{M}(E),$ is always preserved by $\Delta$, that is, $\Delta\left( Sph^+_{M}(E) \right) = Sph^+_{N}(\Delta(E))$. We consequently have $$\{\Delta(p)\}=\Delta(\{p\})=\Delta\left(Sph^+_{M} \left( Sph^+_{M}(p) \right) \right) = Sph^+_{N} \left( Sph^+_{N}(\Delta(p)) \right),$$ and a new application of Theorem \ref{t characterization of projection in terms of the sphere around a positive element} assures that $\Delta(p)$ is a projection in $N$.\smallskip

We have shown that $\Delta (\mathcal{P}roj(M)^*) \subseteq \mathcal{P}roj(N)^*$. Since $\Delta^{-1}$ also is a surjective isometry, we get $\Delta (\mathcal{P}roj(M)^*) = \mathcal{P}roj(N)^*$. Clearly $\Delta|_{\mathcal{P}roj(M)^*} : \mathcal{P}roj(M)^*\to \mathcal{P}roj(N)^*$ is a surjective isometry.
\end{proof}

When in the previous proof we replace Theorem \ref{t characterization of projection in terms of the sphere around a positive element} with Theorem \ref{t characterization of projection in terms of the sphere around a positive element k(H)} the same arguments are valid to prove the following:

\begin{corollary}\label{c first consequence K(H)} Let $H_2$ and $H_3$ be separable complex Hilbert spaces, and let us assume that $\Delta : S(K(H_2)^+)\to S(K(H_3)^+)$ is a surjective isometry. Then $\Delta$ maps $\mathcal{P}roj(K(H_2))^*$ to $\mathcal{P}roj(K(H_3))^*$, and the restriction $$\Delta|_{\mathcal{P}roj(K(H_2))^*} : \mathcal{P}roj(K(H_2))^*\to \mathcal{P}roj(K(H_3))^*$$ is a surjective isometry.$\hfill\Box$
\end{corollary}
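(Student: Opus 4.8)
The plan is to mimic verbatim the proof of Corollary \ref{c first consequence}, replacing every invocation of Theorem \ref{t characterization of projection in terms of the sphere around a positive element} by its $K(H)$-counterpart, Theorem \ref{t characterization of projection in terms of the sphere around a positive element k(H)}. The only structural ingredient needed, beyond that theorem, is the elementary observation that a surjective isometry transports unit spheres around subsets: if $\Delta : S(K(H_2)^+) \to S(K(H_3)^+)$ is a surjective isometry and $E \subseteq S(K(H_2)^+)$, then $\Delta\big( Sph^+_{K(H_2)}(E) \big) = Sph^+_{K(H_3)}(\Delta(E))$. This is immediate from the definition of $Sph(\cdot\,;\cdot)$: the condition $x \in Sph^+_{K(H_2)}(E)$ reads $x \in S(K(H_2)^+)$ and $\|x-b\| = 1$ for all $b \in E$, and a surjective isometry between $S(K(H_2)^+)$ and $S(K(H_3)^+)$ converts this, bijectively, into the defining condition of $Sph^+_{K(H_3)}(\Delta(E))$.

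Granting this, I would argue as follows. Let $p$ be a non-zero projection in $K(H_2)$; since $H_2$ is separable, Theorem \ref{t characterization of projection in terms of the sphere around a positive element k(H)} gives $Sph^+_{K(H_2)}\big( Sph^+_{K(H_2)}(p) \big) = \{p\}$. Applying the transport identity twice,
$$\{\Delta(p)\} = \Delta\Big( Sph^+_{K(H_2)}\big( Sph^+_{K(H_2)}(p) \big) \Big) = Sph^+_{K(H_3)}\big( Sph^+_{K(H_3)}(\Delta(p)) \big).$$
Now $\Delta(p)$ is a positive norm-one element of $K(H_3)$, with $H_3$ separable, so the implication $(b)\Rightarrow(a)$ of Theorem \ref{t characterization of projection in terms of the sphere around a positive element k(H)} forces $\Delta(p)$ to be a projection, necessarily non-zero since it has norm one. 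Hence $\Delta\big(\mathcal{P}roj(K(H_2))^*\big) \subseteq \mathcal{P}roj(K(H_3))^*$.

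Finally, $\Delta^{-1} : S(K(H_3)^+) \to S(K(H_2)^+)$ is again a surjective isometry between unit spheres of positive cones of separable compact-operator algebras, so the same argument yields $\Delta^{-1}\big(\mathcal{P}roj(K(H_3))^*\big) \subseteq \mathcal{P}roj(K(H_2))^*$; combining the two inclusions gives $\Delta\big(\mathcal{P}roj(K(H_2))^*\big) = \mathcal{P}roj(K(H_3))^*$. Since the restriction $\Delta|_{\mathcal{P}roj(K(H_2))^*}$ is then a bijection onto $\mathcal{P}roj(K(H_3))^*$ that manifestly preserves distances, it is a surjective isometry, completing the proof. There is essentially no obstacle: the whole content is outsourced to Theorem \ref{t characterization of projection in terms of the sphere around a positive element k(H)}, and the only point requiring a line of verification is the sphere-transport identity, which is routine once one unwinds the definition of $Sph(E;P)$.
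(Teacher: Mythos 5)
Your proposal is correct and follows exactly the route the paper intends: the paper proves this corollary by observing that the argument of Corollary \ref{c first consequence} goes through verbatim once Theorem \ref{t characterization of projection in terms of the sphere around a positive element} is replaced by Theorem \ref{t characterization of projection in terms of the sphere around a positive element k(H)}, which is precisely what you do, including the sphere-transport identity and the symmetric use of $\Delta^{-1}$.
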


Another result established by G. Nagy in \cite{Nagy2017} asserts that for a finite-dimensional complex Hilbert space $H$, the equality $$ Sph^+_{B(H)} \left( Sph^+_{B(H)}(a) \right) =\left\{ b\in S(B(H)^+) : \begin{array}{c}
                                                                                           \hbox{Fix}(a) \subseteq \hbox{Fix} (b), \\
                                                                                            \hbox{ and } \ker(a)\subseteq \ker(b)
                                                                                         \end{array} \right\}$$
holds for every element $a$ in $S(B(H)^+)$ (see the beginning of the proof of \cite[Claim 1]{Nagy2017}). Our next result is an abstract version of Nagy's result to the space of compact operators.

\begin{theorem}\label{t bi spherical set K(H)} Let $H_2$ be a separable infinite-dimensional complex Hilbert space. Then the identity $$Sph^+_{K(H_2)} \left( Sph^+_{K(H_2)}(a) \right) =\left\{ b\in S(K(H_2)^+) : \!\! \begin{array}{c}
    s_{_{K(H_2)}} (a) \leq s_{_{K(H_2)}} (b),  \hbox{ and }\\
     \textbf{1}-r_{_{B(H_2)}}(a)\leq \textbf{1}-r_{_{B(H_2)}}(b)
  \end{array}\!\!
 \right\},$$ holds for every $a$ in the unit sphere of $K(H_2)^+$.
\end{theorem}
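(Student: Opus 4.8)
The strategy is to compute the inner sphere $Sph^+_{K(H_2)}(a)$ explicitly via Lemma~\ref{l when distance is exactly one}, and then intersect all the spheres around its members. Write $s = s_{_{K(H_2)}}(a)$ and $r = r_{_{B(H_2)}}(a)$, so $s$ is a finite-rank projection in $K(H_2)$ and $r$ is a (possibly infinite-rank) projection in $B(H_2)$ with $s \le r$; note $\mathbf{1}-r$ sits in $B(H_2)^{**} = B(H_2) = K(H_2)^{**}$ but need not lie in $K(H_2)$. By Lemma~\ref{l when distance is exactly one}, a positive norm-one compact operator $x$ lies in $Sph^+_{K(H_2)}(a)$ if and only if there is a minimal projection $e \in B(H_2)$ with either (a) $e \le a$ and $e \perp x$, or (b) $e \le x$ and $e \perp a$. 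Since $e \le a$ forces $e \le s_{_{B(H_2)}}(a) = s$ (as $a = e + (\mathbf 1-e)a(\mathbf 1-e)$), and $e \perp a$ is equivalent to $e \le \mathbf 1 - r$, this says: $x \in Sph^+_{K(H_2)}(a)$ iff $x$ is orthogonal to some minimal subprojection of $s$, or $x$ dominates some minimal subprojection of $\mathbf 1-r$.

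Now I would fix a candidate $b \in S(K(H_2)^+)$ and analyze when $b \in Sph^+_{K(H_2)}\big(Sph^+_{K(H_2)}(a)\big)$, i.e.\ $\|b - x\| = 1$ for every $x$ in the set just described. For the inclusion $\supseteq$: assume $s \le s_{_{K(H_2)}}(b)$ and $\mathbf 1 - r \le \mathbf 1 - r_{_{B(H_2)}}(b)$ (equivalently $r_{_{B(H_2)}}(b) \le r$). Given $x \in Sph^+_{K(H_2)}(a)$, in case (a) we have a minimal $e \le s \le s_{_{K(H_2)}}(b) \le b$ with $e \perp x$, so Lemma~\ref{l when distance is exactly one} gives $\|b-x\| = 1$; in case (b) we have a minimal $e \le x$ with $e \perp a$, i.e.\ $e \le \mathbf 1 - r \le \mathbf 1 - r_{_{B(H_2)}}(b)$, so $e \perp b$ and again $\|b-x\|=1$. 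Hence $b$ is in the bi-sphere. For the converse inclusion $\subseteq$, suppose $b$ violates one of the two conditions and produce a witness $x \in Sph^+_{K(H_2)}(a)$ with $\|b-x\| < 1$: if $s \not\le s_{_{K(H_2)}}(b)$, pick a minimal projection $e_0 \le s$ with $e_0 \not\le b$ (possible since $s$ is finite-rank and a projection $\le s$ lies under $b$ iff it lies under $b$'s spectral projection for $\{1\}$, which fails somewhere), and build $x$ by taking $e_0 \le a$ as the "case (a)" witness — one needs $x \perp e_0$ and $x$ close to $b$; concretely shrink the $e_0$-component of $b$ (using a functional-calculus deformation of $b$ on the range of $b$ plus a suitable orthogonal minimal piece, exactly as in the constructions \eqref{eq function c} in the proof of Theorem~\ref{t characterization of projection in terms of the sphere around a positive element k(H)}) to get a compact positive norm-one $x$ with $x \perp e_0$ and $\|b - x\|<1$. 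Symmetrically, if $r_{_{B(H_2)}}(b) \not\le r$, choose a minimal $e_1 \le \mathbf 1 - r = \mathbf 1 - r_{_{B(H_2)}}(a)$ that is \emph{not} dominated by $b$; then modify $b$ to an $x$ which dominates $e_1$ while $\|b-x\|<1$, using that $e_1 \perp a$ makes $x$ land in $Sph^+_{K(H_2)}(a)$ via case (b).

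The main obstacle I anticipate is the $\subseteq$ direction, and specifically the second failure mode, where $r_{_{B(H_2)}}(b) \not\le r$. Here $\mathbf 1 - r$ may be infinite-rank and lives only in $B(H_2)$, not in $K(H_2)$, so one cannot directly "add a chunk of $\mathbf 1-r$" to $b$; instead one must pick a single minimal projection $e_1 \le \mathbf 1 - r$ transverse to $b$ and perform a rank-one/finite-rank perturbation of $b$ that pushes an eigenvalue up to $1$ along $e_1$ while keeping the operator compact, positive, norm one, and within distance $<1$ of $b$ — this requires being careful that $e_1$ is not already "used up" by the support of $b$ and that the deformation does not accidentally create distance exactly $1$. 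A clean way to organize this is to separate the analysis on the three mutually orthogonal pieces $r_{_{B(H_2)}}(a) \wedge r_{_{B(H_2)}}(b)$-region, the region where $a$ acts but $b$ does not (giving the $s \le s(b)$ condition), and the region where $b$ acts but $a$ does not (giving $\mathbf 1 - r \le \mathbf 1 - r(b)$), reducing each to a one- or two-dimensional commutative computation handled by functional calculus as in \eqref{eq function c}. Once the two necessary conditions are extracted, combining them with the already-established $\supseteq$ inclusion finishes the proof.
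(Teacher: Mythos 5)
Your ($\supseteq$) half is correct and is essentially the paper's argument. The gap is in the ($\subseteq$) half, which is the substance of the theorem: your contrapositive plan requires, given a minimal projection $e_0\leq s_{_{K(H_2)}}(a)$ with $e_0\nleq s_{_{K(H_2)}}(b)$ (resp. $e_1\leq \textbf{1}-r_{_{B(H_2)}}(a)$ with $e_1 b\neq 0$), an element $x\in S(K(H_2)^+)$ with $x\perp e_0$ (resp. $x\geq e_1$) and $\|b-x\|<1$, and you never construct it. The construction you hint at --- a functional-calculus deformation of $b$ as in \eqref{eq function c}, plus an orthogonal piece --- cannot produce it in general, because $e_0$ need not commute with $b$: if, say, $b$ is a rank-two projection and $e_0=\eta\otimes\eta$ with $\eta$ having nonzero components both inside and outside $\mathrm{ran}\,b$, then every $f(b)$ is a scalar multiple of $b$, so no operator of the form ``function of $b$ plus something orthogonal to $\mathrm{ran}\,b$'' kills $\eta$; moreover any rank-one candidate $x$ fails outright, since a unit vector $v\in \mathrm{ran}\, s_{_{K(H_2)}}(b)$ orthogonal to $\mathrm{ran}\,x$ gives $\langle (b-x)v,v\rangle=1$, hence $\|b-x\|=1$. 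In the paper, the functional-calculus tricks of type \eqref{eq function c bis} are only invoked \emph{after} it has already been shown that $b$ commutes with the relevant projection (e.g. $pb=bp=b$ or $ab=ba=b$), which is exactly the commutativity your sketch assumes implicitly. Completing your route needs genuinely different witnesses --- e.g. a rank-$k$ projection obtained by rotating the offending direction of $\mathrm{ran}\,s_{_{K(H_2)}}(b)$ off $\eta$, or $x=e_1+(\textbf{1}-e_1)b(\textbf{1}-e_1)$ in the second failure mode --- together with a nontrivial spectral estimate showing $\|b-x\|<1$; none of this is in the proposal.

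For contrast, the paper never perturbs $b$ at all. It argues forwards: for each minimal $e\leq \textbf{1}-r_{_{B(H_2)}}(a)$ (and similarly for pairs $v\leq s_{_{K(H_2)}}(a)$, $q\leq r_{_{B(H_2)}}(a)-s_{_{K(H_2)}}(a)$) it feeds the hand-picked element $e+\sum_n\frac{1}{2n}v_n\in Sph^+_{K(H_2)}(a)$ --- whose range projection is $\textbf{1}$ and whose eigenvalue-one eigenspace is exactly $e$ --- into Lemma \ref{l when distance is exactly one}; the equality $\|b-\left(e+\sum_n\frac{1}{2n}v_n\right)\|=1$ then forces $e\perp b$ or $v\leq b$ directly (Property $(\checkmark.1)$), and a dichotomy plus the commutative constructions finishes the job. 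That mechanism, or an equivalent substitute, is what your write-up is missing.
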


\begin{proof}($\supseteq$) We recall that, for each $b\in S(K(H_2)^+)$ we have $s_{_{K(H_2)}} (b) =p_{b}\in K(H_2)$. Let $b\in S(K(H_2)^+)$ with $s_{_{K(H_2)}} (a) \leq s_{_{K(H_2)}} (b)$, and $\textbf{1}-r_{_{B(H_2)}}(a)\leq \textbf{1}-r_{_{B(H_2)}}(b)$. We pick an arbitrary $c\in Sph^+_{K(H_2)}(a)$. Since $\| a-c\|=1$, Lemma \ref{l when distance is exactly one} implies the existence of a minimal projection $e$ in $B(H_2)$ such that one of the following statements holds:
\begin{enumerate}[$(a)$]\item $e\leq a$ and $e\perp c$ in $K(H_2)^{**}= B(H_2)$;
\item $e\leq c$ and $e\perp a$ in $K(H_2)^{**}=B(H_2)$.
\end{enumerate}

In case $(a)$, we have $e\leq s_{_{K(H_2)}}(a) \leq s_{_{K(H_2)}}(b)$ and $e\perp c$. Lemma \ref{l when distance is exactly one} implies that $\| c-b \| =1$.\smallskip

In case $(b)$, the condition $e\perp a$ implies that $e\leq \textbf{1}-r_{_{B(H_2)}}(a) \leq \textbf{1}-r_{_{B(H_2)}}(b)$, and thus $e\perp b$. Since $e\leq c$, Lemma \ref{l when distance is exactly one} assures that $\|c-b\|=1$.\smallskip

We have shown that $\|c-b\|=1$ for all $c \in Sph^+_{K(H_2)}(a)$, and thus $b$ lies in $Sph^+_{K(H_2)}(Sph^+_{K(H_2)}(a)).$\smallskip

($\subseteq$) Let us take $b\in Sph^+_{K(H_2)} \left( Sph^+_{K(H_2)}(a) \right)$.\smallskip

We shall first prove that $\textbf{1}-r_{_{B(H_2)}}(a)\leq \textbf{1}-r_{_{B(H_2)}}(b)$. If $\textbf{1}-r_{_{B(H_2)}}(a) =0$ there is nothing to prove. Otherwise, let $e$ be a minimal projection in $K(H_2)$ with $e\leq \textbf{1}-r_{_{B(H_2)}}(a)$. Let $(e_n)$ be a maximal family of mutually orthogonal minimal projections in $K(H_2)$ such that $\displaystyle \textbf{1}-e = \sum_{n=1}^{\infty} e_n$ (here we apply that $H_2$ is separable). Since $\displaystyle \left\| e + \sum_{n=1}^{\infty} \frac{1}{2n} e_n -a\right\| =1,$ and $\displaystyle e + \sum_{n=1}^{\infty} \frac{1}{2n} e_n \in K(H_2)$, we deduce that $\displaystyle \left\| e + \sum_{n=1}^{\infty} \frac{1}{2n} e_n -b\right\| =1$. Lemma \ref{l when distance is exactly one} proves the existence of a minimal projection $q\in B(H_2)$ such that one of the next statements holds: \begin{enumerate}[$(a)$]\item $\displaystyle q\leq e + \sum_{n=1}^{\infty} \frac{1}{2n} e_n$ and $q\perp b$ in $B(H_2)$;
\item $q\leq b$ and $\displaystyle q\perp e + \sum_{n=1}^{\infty} \frac{1}{2n} e_n$ in $B(H_2)$.
\end{enumerate} We claim that case $(b)$ is impossible. Indeed, $\displaystyle q\perp e + \sum_{n=1}^{\infty} \frac{1}{2n} e_n$ is equivalent to $\displaystyle q\perp r_{_{B(H_2)}} \left(e + \sum_{n=1}^{\infty} \frac{1}{2n} e_n\right) =\textbf{1},$ which is impossible. Therefore, only case $(a)$ holds, and by the minimality of $q$, $q$ coincides with $e,$ and $e=q\perp b$ assures that $q=e\leq \textbf{1}-r_{_{B(H_2)}}(b)$.\smallskip

We have shown that for every minimal projection $e$ in $B(H_2)$ with $e\leq \textbf{1}-r_{_{B(H_2)}}(a)$ we have $q\leq \textbf{1}-r_{_{B(H_2)}}(b)$. Since in $B(H_2)$ every projection is the least upper bound of all minimal projections smaller than or equal to it, we deduce that  $$\textbf{1}-r_{_{B(H_2)}}(a)\leq \textbf{1}-r_{_{B(H_2)}}(b).$$\smallskip

Our next goal is to show that $s_{_{K(H_2)}} (a) \leq s_{_{K(H_2)}} (b)$. If $r_{_{B(H_2)}}(a)- s_{_{B(H_2)}}(a)=0$, we have $ s_{_{K(H_2)}}(a)=a=r_{_{B(H_2)}}(a)\geq  r_{_{B(H_2)}}(b)\geq  s_{_{B(H_2)}}(b)$. In particular, $a$ is a projection in $K(H_2)$. We shall prove that $b$ is a projection and $a=b$. Let $\sigma(b)$ be the spectrum of $b$, let $\mathcal{C}$ denote the C$^*$-subalgebra of $K(H_2)$ generated by $b$ and $a=r_{_{K(H_2)}}(a)$, and let us identify $\mathcal{C}$ with $C(\sigma(b))$ and $b$ with the identity function on $\sigma(b)$. If there exists $t_0\in \sigma(b)\cap (0,1)$, then the function \begin{equation}\label{eq function c bis} c(t)= \left\{\begin{array}{cc}
                                   0 & \hbox{ if } t\in \sigma(b)\cap [0,t_0]; \\
                                   \frac{1+t_0}{1-t_0} (t-t_0) & \hbox{ if } t\in \sigma(b)\cap [0,t_0]; \\
                                   t & \hbox{ if } t\in \sigma(b)\cap [\frac{1+t_0}{2},1],
                                 \end{array}
 \right.
 \end{equation} defines a positive, norm-one element in $c\in C(\sigma(b))\subset K(H_2)$ such that $\| a -c\| =1$ and $\| b-c\| <1$. This contradicts that $b \in Sph^+_{K(H_2)} \left( Sph^+_{K(H_2)}(a) \right)$. Therefore, $\sigma(b) \subseteq\{0,1\}$, and hence $b$ is a projection. If $s_{_{B(H_2)}}(b) = b < s_{_{K(H_2)}}(a)=a$, we get $\| b - s_{_{K(H_2)}}(b)\|=0,$ and $\|a -b\| = \|a - s_{_{K(H_2)}}(b)\| = \|s_{_{K(H_2)}}(a) - s_{_{K(H_2)}}(b)\| =1$, contradicting that $b \in Sph^+_{K(H_2)} \left( Sph^+_{K(H_2)}(a) \right)$. Therefore $a=b$ is a projection and $s_{_{K(H_2)}}(b)= b=a= s_{_{K(H_2)}}(a)$.\smallskip

We assume next that $r_{_{B(H_2)}}(a)- s_{_{K(H_2)}}(a)\neq 0$. We first prove the following \textbf{Property~$(\checkmark.1)$}:
for each pair of minimal projections $v,q\in B(H_2)$ with $v\leq s_{_{K(H_2)}} (a)$ and $q\leq r_{_{B(H_2)}}(a)- s_{_{K(H_2)}}(a)$ one of the following statements holds:
\begin{enumerate}[$(1)$]\item $q\perp b$, or equivalently, $q\leq \textbf{1}-r_{_{B(H_2)}}(b)$;
\item $v\leq s_{_{B(H_2)}}(b) \leq b$.
\end{enumerate}

To prove the property, we consider a family $(v_n)$ of mutually orthogonal minimal projections in $K(H_2)$ satisfying $\displaystyle \textbf{1}-v-q =\sum_{n=1}^{\infty} v_n$, and the element $\displaystyle q+ \sum_{n=1}^{\infty} \frac{1}{2n} v_n \in S(K(H_2)^+)$. Clearly, $v$ is a minimal projection in $B(H_2)$ satisfying $v\leq a$ and $v\perp q, \textbf{1}-v$, and hence $v\perp \displaystyle q+ \sum_{n=1}^{\infty} \frac{1}{2n} v_n $. Lemma \ref{l when distance is exactly one} assures that $\displaystyle \left\| a-\left(\displaystyle q+ \sum_{n=1}^{\infty} \frac{1}{2n} v_n \right)\right\|=1,$ and by hypothesis $\displaystyle \left\| b-\left(\displaystyle q+ \sum_{n=1}^{\infty} \frac{1}{2n} v_n \right)\right\|=1$.  A new application of Lemma \ref{l when distance is exactly one} assures the existence of a minimal projection $e\in B(H_2)$ such that one of the following statements holds:\begin{enumerate}[$(a)$]\item $e\leq b$ and $e\perp \displaystyle q+ \sum_{n=1}^{\infty} \frac{1}{2n} v_n$ in $B(H_2)$;
\item $e\leq \displaystyle q+ \sum_{n=1}^{\infty} \frac{1}{2n} v_n$ and $e\perp b$ in $B(H_2)$.
\end{enumerate}
In the second case $e= q \perp b,$ equivalently, $q\leq \textbf{1}-r_{_{B(H_2)}}(b)$. In the first case $e\leq b\leq r_{_{B(H_2)}}(b) \leq r_{_{B(H_2)}}(a)$, and $e\perp  q, \textbf{1}-v.$ Since $e\leq  r_{_{B(H_2)}}(a)$ and $r_{_{B(H_2)}}(a) = (r_{_{B(H_2)}}(a)-v) +v$, we deduce that $e\leq v$. The minimality of $e$ and $v$ proves that $e=v \leq b$, and thus $v\leq s_{_{B(H_2)}}(b) \leq b$. This finishes the proof of \emph{Property~$(\checkmark.1)$}.\smallskip

We discuss now the following dichotomy:
\begin{enumerate}[$\bullet$]\item There exists a minimal projection $v$ in $B(H_2)$ with $v\leq s_{_{K(H_2)}} (a)$ and $v\nleq  s_{_{K(H_2)}} (b)$;
\item For every minimal projection $v$ in $B(H_2)$ with $v\leq s_{_{K(H_2)}} (a)$ we have $v\leq  s_{_{K(H_2)}} (b)$.
\end{enumerate}

In the first case, let $v$ be a minimal projection in $K(H_2)$ with $v\leq s_{_{K(H_2)}} (a)$ and $v\nleq  s_{_{K(H_2)}} (b)$. \emph{Property~$(\checkmark.1)$} implies that for every minimal projection $q\in B(H_2)$ with $q\leq r_{_{B(H_2)}}(a)- s_{_{K(H_2)}}(a)$ we have $q\leq \textbf{1}-r_{_{B(H_2)}} (b)$. This proves that $$r_{_{B(H_2)}}(a)- s_{_{K(H_2)}}(a)\leq \textbf{1}-r_{_{B(H_2)}} (b).$$ We have therefore shown that $$\textbf{1}- s_{_{K(H_2)}} (a)=(\textbf{1}-r_{_{B(H_2)}} (a)) + (r_{_{B(H_2)}} (a)- s_{_{K(H_2)}} (a)) \leq  \textbf{1}-r_{_{B(H_2)}} (b),$$ and thus $r_{_{B(H_2)}} (b)\leq s_{_{K(H_2)}} (a).$ In this case we have $0\leq b \leq r_{_{B(H_2)}} (b)\leq s_{_{K(H_2)}} (a)$, and then $ a b = ba = b$. If $\sigma(b)\cap (0,1)\neq \emptyset$, by considering the C$^*$-subalgebra of $K(H_2)$ generated by $b$, and the definition in \eqref{eq function c bis}, we can find an element $c$ in $S(K(H_2)^+)$ such that $\|a-c\|= 1$ and $\|b-c\|<1$, contradicting that $b \in Sph^+_{K(H_2)} \left( Sph^+_{K(H_2)}(a) \right)$. Therefore $\sigma(b)\subseteq \{0,1\},$ and hence $b$ is a projection with $b\leq s_{_{K(H_2)}} (a)$. If $b< s_{_{K(H_2)}} (a)$, we have $\|b-b\|=0$ and $\|a-b\|=1$ contradicting, again, that $b \in Sph^+_{K(H_2)} \left( Sph^+_{K(H_2)}(a) \right)$. We have shown that in this case $b= s_{_{K(H_2)}} (b) = s_{_{K(H_2)}} (a)$.\smallskip

In the second case of the above dichotomy, having in mind that $s_{_{K(H_2)}} (a)$ can be written as a finite sum of mutually orthogonal minimal projections in $K(H_2)$, we have $s_{_{K(H_2)}} (a)\leq  s_{_{K(H_2)}} (b)$ as desired.
\end{proof}

\begin{remark}\label{remark characterization proj in K(H) is a consequence of previous Th} Let us remark that Theorem \ref{t characterization of projection in terms of the sphere around a positive element k(H)} can be derived as a straight consequence of our previous Theorem \ref{t bi spherical set K(H)}. Namely, let $H_2$ be a separable complex Hilbert space, and let $a$ be an element in $S(K(H_2)^+)$. Applying Theorem \ref{t bi spherical set K(H)} we get $$Sph^+_{K(H_2)} \left( Sph^+_{K(H_2)}(a) \right) =\left\{ b\in S(K(H_2)^+) : \!\! \begin{array}{c}
    s_{_{K(H_2)}} (a) \leq s_{_{K(H_2)}} (b),  \hbox{ and }\\
     \textbf{1}-r_{_{B(H_2)}}(a)\leq \textbf{1}-r_{_{B(H_2)}}(b)
  \end{array}\!\!
 \right\}.$$ If $a$ is a projection, then $s_{_{K(H_2)}} (a) = r_{_{B(H_2)}} (a)= a$ and hence $$Sph^+_{K(H_2)} \left( Sph^+_{K(H_2)}(a) \right) = \{a\}.$$ If, on the other hand, $Sph^+_{K(H_2)} \left( Sph^+_{K(H_2)}(a) \right) = \{a\}$, having in mind that $s_{_{K(H_2)}} (a)$ belongs to $S(K(H_2)^+)$, and $s_{_{K(H_2)}} (a) \leq r_{_{B(H_2)}} (a)$, we deduce that $s_{_{K(H_2)}} (a) $ lies in the set $Sph^+_{K(H_2)} \left( Sph^+_{K(H_2)}(a) \right) = \{a\}$, and hence $s_{_{K(H_2)}} (a) = a$ is a projection.
\end{remark}

\medskip\medskip

\textbf{Acknowledgements} Author partially supported by the Spanish Ministry of Economy and Competitiveness (MINECO) and European Regional Development Fund project no. MTM2014-58984-P and Junta de Andaluc\'{\i}a grant FQM375.

\bibliographystyle{amsplain}

\begin{thebibliography}{99}

\bibitem{FerGarPeVill17} F.J. Fern\'andez-Polo, J.J. Garc{\'e}s, A.M. Peralta, I. Villanueva, Tingley's problem for spaces of trace class operators,  \emph{Linear Algebra Appl.} \textbf{529}, 294-323 (2017).

\bibitem{FerJorPer2018} F.J. Fernández-Polo, E. Jord{\'a}, A.M. Peralta, Tingley's problem for $p$-Schatten von Neumann classes for $2<p<\infty$, preprint 2018. arXiv:1803.00763v1


\bibitem{FerPe17c} F.J. Fern\'andez-Polo, A.M. Peralta, Tingley's problem through the facial structure of an atomic JBW$^*$-triple, \emph{J. Math. Anal. Appl.} \textbf{455}, 750-760 (2017).

\bibitem{FerPe17b} F.J. Fern\'andez-Polo, A.M. Peralta, On the extension of isometries between the unit spheres of a C$^*$-algebra and $B(H)$,  \emph{Trans. Amer. Math. Soc.} \textbf{5}, 63-80 (2018).

\bibitem{FerPe18} F.J. Fern\'andez-Polo, A.M. Peralta, Partial Isometries: a survey, \emph{Adv. Oper. Theory} \textbf{3}, no. 1, 87-128 (2018).

\bibitem{FerPe17d} F.J. Fern\'andez-Polo, A.M. Peralta, On the extension of isometries between the unit spheres of von Neumann algebras, preprint 2017. arXiv:1709.08529v1

\bibitem{MolNag2012} L. Moln{\'a}r, G. Nagy, Isometries and relative entropy preserving maps on density operators, \emph{Linear Multilinear Algebra} \textbf{60}, 93-108 (2012).

\bibitem{MolTim2003} L. Moln{\'a}r, W. Timmermann, Isometries of quantum states, \emph{J. Phys. A: Math. Gen.} \textbf{36}, 267-273 (2003).

\bibitem{Mori2017} M. Mori, Tingley's problem through the facial structure of operator algebras, preprint 2017. arXiv:1712.09192v1

\bibitem{Nagy2013} G. Nagy, Isometries on positive operators of unit norm, \emph{Publ. Math. Debrecen} \textbf{82}, 183-192 (2013).

\bibitem{Nagy2017} G. Nagy, Isometries of spaces of normalized positive operators under the operator norm, \emph{Publ. Math. Debrecen} \textbf{92}, 243-254 (2018).

\bibitem{Ped} G.K. Pedersen, \emph{C$^*$-algebras and their automorphism groups},
London Mathematical Society Monographs Vol. 14, Academic Press, London, 1979.

\bibitem{Pe2018} A.M. Peralta, A survey on Tingley's problem for operator algebras, to appear in \emph{Acta Sci. Math. Szeged}. arXiv:1801.02473v1

\bibitem{PeTan16} A.M. Peralta, R. Tanaka, A solution to Tingley's problem for isometries between the unit spheres of compact C$^*$-algebras and JB$^*$-triples, to appear in \emph{Sci. China Math.} arXiv:1608.06327v1.

\bibitem{S} S. Sakai, \emph{C$^*$-algebras and $W^*$-algebras}, Springer, Berlin, 1971.

\bibitem{Tak} M. Takesaki, \newblock {\em Theory of operator algebras I},\newblock Springer, New York, 2003.

\bibitem{Tan2017} R. Tanaka, Spherical isometries of finite dimensional C$^*$-algebras, \emph{J. Math. Anal. Appl.} \textbf{445}, no. 1, 337-341 (2017).

\bibitem{Tan2017b} R. Tanaka, Tingley's problem on finite von Neumann algebras, \emph{J. Math. Anal. Appl.} \textbf{451}, 319-326 (2017).

\end{thebibliography}

\end{document}